\newtheorem{theorem}{\textbf{Theorem}}[section]
\newtheorem{lemma}{\textbf{Lemma}}[section]
\newtheorem{proposition}{\textbf{Proposition}}[section]
\newtheorem{corollary}{\textbf{Corollary}}[section]
\newtheorem{remark}{\textbf{Remark}}[section]
\newtheorem{definition}{\textbf{Definition}}[section]
\def\be{\begin{equation}}
\def\ee{\end{equation}}
\def\bea{\begin{eqnarray}}
\def\eea{\end{eqnarray}}
\def\bt{\begin{theorem}}
\def\et{\end{theorem}}
\def\bl{\begin{lemma}}
\def\el{\end{lemma}}
\def\br{\begin{remark}}
\def\er{\end{remark}}
\def\bp{\begin{proposition}}
\def\ep{\end{proposition}}
\def\bc{\begin{corollary}}
\def\ec{\end{corollary}}
\def\bd{\begin{definition}}
\def\ed{\end{definition}}
\begin{document}

\title{Global Well-posedness of a Navier--Stokes--Cahn--Hilliard System with Chemotaxis and Singular Potential in 2D}

\author{
Jingning He
\footnote{School of Mathematical Sciences, Fudan University, Handan Road 220, Shanghai 200433, China.
Email: \texttt{jingninghe2020@gmail.com}
}
\ and Hao Wu
\footnote{Corresponding author. School of Mathematical Sciences and Shanghai Key Laboratory for Contemporary Applied Mathematics, Fudan University, Handan Road 220, Shanghai 200433, China;
Key Laboratory of Mathematics for Nonlinear Sciences (Fudan University), Ministry of Education, Handan Road 220, Shanghai 200433, China.
Email: \texttt{haowufd@fudan.edu.cn}
}
}
\date{\today}
\maketitle

%%%%%%%%%%%%%%%%%%%%%%%%%%%%%%%%%%%%%%%%%%%%%%%%%%%%%%%%%%%%%%%%%%%%%%%%%%%%%%%%%%%%%%%%%%%%%%%%%%%%%%%%%%%%%%%%%%%%%%%%%%%

\begin{abstract}
\noindent We study a diffuse interface model that describes the dynamics of incompressible two-phase flows with chemotaxis effects. This model also takes into account some significant mechanisms such as active transport and nonlocal interactions of Oono's type. The system under investigation couples the Navier--Stokes equations for the fluid velocity, a convective Cahn--Hilliard equation with physically relevant singular potential for the phase-field variable and an advection-diffusion-reaction equation for the nutrient density. For the initial boundary value problem in a smooth bounded domain $\Omega\subset \mathbb{R}^2$, we first prove the existence and uniqueness of global strong solutions that are strictly separated from the pure states $\pm 1$ over time. Then we prove the continuous dependence with respect to the initial data and source term for the  strong solution in energy norms. Finally, we show the propagation of regularity for global weak solutions.
\medskip \\
\noindent%%%%%
\textbf{Keywords:} Navier--Stokes--Cahn--Hilliard system, Chemotaxis, Active transport, Singular potential, Global strong solutions, Separation property. \medskip \\
\medskip\noindent
\textbf{MSC 2010:} 35A01, 35A02, 35K35, 35Q92, 76D05.
\end{abstract}
%% 35A01 Existence problems for PDEs: global existence, local existence, non-existence
%% 35A02 Uniqueness problems for PDEs: global uniqueness, local uniqueness, non-uniqueness
%% 35K35 Initial-boundary value problems for higher-order parabolic equations
%% 35Q92 PDEs in connection with biology, chemistry and other natural sciences
%% 76D05 Navier-Stokes equations for incompressible viscous fluids

%%%%%%%%%%%%%%%%%%%%%%%%%%%%%%%%%%%%%%%%%%%%%%%%%%%%%%%%%%%%%%%%%%%%%%%%%%%%
\section{Introduction}
\setcounter{equation}{0}
\noindent
In this paper, we consider the following Navier--Stokes--Cahn--Hilliard type system
\begin{subequations}
\begin{alignat}{3}
&\partial_t  \bm{ v}+\bm{ v} \cdot \nabla  \bm {v}-\textrm{div} (2  \eta(\varphi) D\bm{v} )+\nabla p=(\mu+\chi \sigma)\nabla \varphi,\label{f3.c} \\
&\textrm{div}\ \bm{v}=0,\label{f3.c1}\\
&\partial_t \varphi+\bm{v} \cdot \nabla \varphi=\Delta \mu-\alpha(\varphi-c_0),\label{f1.a} \\
&\mu=A\varPsi'(\varphi)-B\Delta \varphi-\chi \sigma,\label{f4.d} \\
&\partial_t \sigma+\bm{v} \cdot \nabla \sigma= \Delta (\sigma+\chi(1-\varphi))-\mathcal{C} h(\varphi) \sigma +S, \label{f2.b}
\end{alignat}
\end{subequations}
in $\Omega\times(0,T)$, where $\Omega \subset\mathbb{R}^2$ is a bounded domain with smooth boundary $\partial\Omega$ and $T>0$. The system is subject to the following boundary conditions
\begin{alignat}{3}
&\bm{v}=\mathbf{0},\quad {\partial}_{\bm{n}}\varphi={\partial}_{\bm{n}}\mu={\partial}_{\bm{n}}\sigma=0,\qquad\qquad &\textrm{on}& \   \partial\Omega\times(0,T),
\label{boundary}
\end{alignat}
and the initial conditions
\begin{alignat}{3}
&\bm{v}|_{t=0}=\bm{v}_{0}(x),\ \ \varphi|_{t=0}=\varphi_{0}(x), \ \ \sigma|_{t=0}=\sigma_{0}(x), \qquad &\textrm{in}&\ \Omega.
\label{ini0}
\end{alignat}
Here, $\bm{n}=\bm{n}(x)$ denotes the unit outward normal vector on $\partial\Omega$.
%%%%%%%%%%%%%%%%%%%%%%%%%%%%%%%%%%%%%%%%%%%%%%%%%%%%%%%%%%%%%%%%%%%%%%%%%

System \eqref{f3.c}--\eqref{f2.b} is a simplified version of the general thermodynamically consistent diffuse interface model that was derived in \cite{LW} for a two-phase incompressible fluid mixture with a chemical species subject to diffusion as well as some important transport mechanisms like advection and chemotaxis (see also \cite{S} and the references cited therein). This model can also be viewed as a Navier--Stokes analogue of the generalized Cahn--Hilliard--Darcy system in \cite{DFRSS,GLSS} for tumor growth modelling (see also \cite{EG19jde,EG19sima} for the associated  Cahn--Hilliard--Brinkman system). The fluid velocity $\bm{v}$ is taken as the volume-averaged velocity with $D\bm{v}=\frac{1}{2}(\nabla\bm{ v}+(\nabla\bm{ v}) ^ \mathrm{T})$ being the symmetrized velocity gradient, and the scalar function $p$ is the (modified) pressure. The order parameter $\varphi$ denotes the difference in volume fractions of the binary mixture such that the region $\{\varphi=1\}$ represents fluid 1 and $\{\varphi=-1\}$ represents fluid 2 (i.e., the values $\pm 1$ represent the pure concentrations). The variable $\sigma$ denotes the concentration of the chemical species, e.g., the nutrient.

 For the sake of simplicity, in this paper we assume that the density difference of the mixture as well as the mass transfer between the two components are negligible. The mobilities are assumed positive constants (set to be one), but we allow that the mixture may have unmatched viscosities. Assuming that $\eta_1$, $\eta_2>0$ are viscosities of the two homogeneous fluids, the mean viscosity can be modeled by the concentration dependent term $\eta=\eta(\varphi)$, for instance, a typical form could be the linear combination:
\be
\eta(\varphi)=\eta_1\frac{1+\varphi}{2}+\eta_2\frac{1-\varphi}{2}.
\label{vis}
\ee
 For possible nonlocal interactions (e.g. reversible chemical reaction) between the two components, here we adopt a simple form of Oono's type $-\alpha(\varphi-c_0)$, where $\alpha\geq 0$, $c_0\in(-1,1)$ are given constants (cf. \cite{GGM2017,Mi11}, for general source terms in biological applications, we refer to \cite{Fa15,Fa17}). The nutrient consumption is prescribed by the term $\mathcal{C}h(\varphi)\sigma$, where the non-negative constant $\mathcal{C}$ represents the consumption rate and the function $h$ is an interpolation with $h(-1)=0$ and $h(1)=1$, for instance, the simplest choice could be $h(\varphi) = \frac{1}{2}(1+\varphi)$ (cf. \cite{GLSS}). The function $S=S(x,t)$ stands for possible external source of the nutrient.

 In \eqref{f4.d}, $\mu$ denotes the chemical potential associated to $(\varphi, \sigma)$. The positive constants $A, B$ are related to the surface tension and the thickness of the interfacial layer (i.e., the diffuse interface). The constant coefficient $\chi$ is related to certain specific transport mechanisms such as chemotaxis and active transport in the context of tumor growth modelling (see e.g., \cite{GLSS, GL17e}). The nonlinear function $\varPsi'$ denotes the derivative of a potential $\varPsi$ with double-well structure, i.e., it has two minima and a local unstable maximum in between. A physically significant example is the logarithmic type \cite{CH}:
\be
\varPsi (r)=\frac{\theta}{2}[(1-r)\ln(1-r)+(1+r)\ln(1+r)]+\frac{\theta_{0}}{2}(1-r^2),\quad \forall\, r\in[-1,1],
\label{pot}
\ee
with $0<\theta<\theta_{0}$ (see, e.g., \cite{CH,CMZ}), which is sometimes referred to as Flory--Higgins potential. In the literature, the singular potential $\varPsi$ is often approximated by a fourth-order polynomial
\be
\varPsi(r)=\frac{1}{4}(1-r^2)^2,\quad r\in\mathbb{R}, \label{regular}
\ee
or some more general polynomial functions.

We mention two fundamental properties of problem \eqref{f3.c}--\eqref{ini0}. Integrating (formally) the equation \eqref{f1.a} over the domain $\Omega$, we obtain the following evolution equation for the spatial average of $\varphi$:
\begin{align}
\frac{d}{dt}(\overline{\varphi}-c_0)+\alpha(\overline{\varphi}-c_0)=0,
\quad \overline{\varphi}=\frac{1}{|\Omega|}\int_\Omega \varphi(x)dx,
\label{mph1}
\end{align}
from which we deduce
\begin{align}
\overline{\varphi}(t)-c_0=(\overline{\varphi_0}-c_0)e^{-\alpha t},\quad \forall\, t\geq 0.
\label{mph2}
\end{align}
Hence, if $\overline{\varphi_0}=c_0$, then the total mass (given by $\overline{\varphi}$) is conserved in time, otherwise, $\overline{\varphi}(t)$ converges exponentially fast to $c_0$ provided that $\alpha>0$ (the so-called off-critical case, cf. \cite{BGM,GGM2017,MT}). Next, as it has been shown in \cite{H,LW}, problem \eqref{f3.c}--\eqref{ini0} admits a basic energy law
\begin{align}
& \frac{d}{dt} \int_{\Omega} \Big[ \frac{1}{2}|\bm{v}|^2+A\varPsi(\varphi)+\frac{B}{2}|\nabla \varphi|^2+\frac{1}{2}|\sigma|^2+\chi\sigma(1-\varphi) \Big] dx \notag \\
& \qquad +\int_{\Omega} \Big[ 2\eta(\varphi)|D\bm{v}|^2 + |\nabla \mu|^2+|\nabla(\sigma+\chi(1-\varphi))|^2\Big] dx\nonumber\\
&\quad  =\int_\Omega \left[-\alpha(\varphi-c_0)\mu+(-\mathcal{C} h(\varphi) \sigma +S)(\sigma +\chi(1-\varphi))\right] dx,
\label{BEL}
\end{align}
which plays an important role in the study of its global well-posedness.

Problem \eqref{f3.c}--\eqref{ini0} was first analyzed in \cite{LW} under the choice of a regular potential including the prototype \eqref{regular}. The authors prove the existence of global weak solutions in two and three dimensions for prescribed mass transfer terms, the existence and uniqueness of global strong solutions in two dimensions. However, due to the coupling structure between $\varphi$ and $\sigma$ as well as the loss of maximal principle for the Cahn--Hilliard equation with regular potential, an artificial assumption on the coefficients $A$ and $\chi$ has to be imposed to prove the existence of global weak solutions (see also \cite{GL17e} for a similar situation in the fluid-free case with more general mass source terms). This extra assumption was removed in the recent work \cite{H}, where the author considered a singular potential like \eqref{pot} and proved the existence of global weak solutions in two and three dimensions. Besides, in the two dimensional case, the author established a continuous dependence result that yields the uniqueness of global weak solutions. The proof therein relies on the key property that the singular potential can guarantee the phase function $\varphi$ always stays in the physically relevant interval $[-1,1]$ during the time evolution, i.e., a uniform $L^\infty$ bound for $\varphi$ is available.

Our aim in this paper is to continue the study of \cite{H} and establish the global strong well-posedness of problem \eqref{f3.c}--\eqref{ini0} in the two dimensional setting, under the choice of a singular potential like \eqref{pot}.

First, we prove the existence and uniqueness of global strong solutions to problem \eqref{f3.c}--\eqref{ini0} subject to regular initial data $ \bm{v}_{0} \in \bm{H}^1(\Omega)$ with $\mathrm{div}\bm{v}_0=0$, $\varphi_{0} \in H^{2}(\Omega)$, $\sigma_{0}\in H^1(\Omega)$ with $\left\|\varphi_{0}\right\|_{L^{\infty}} \leq 1,\ \left|\overline{\varphi}_{0}\right|<1$ and
	$A\varPsi^{\prime}\left(\varphi_{0}\right)-B\Delta \varphi_{0} -\chi\sigma_0\in H^1(\Omega)$, which satisfy suitable boundary conditions (see Theorem \ref{2main} below).
The proof is based on the semi-Galerkin approximating scheme introduced in \cite{H}. In this framework, we perform a Galerkin approximation only for the Navier--Stokes equations \eqref{f3.c}, but solve the equations for $(\varphi, \sigma)$ independently. Again, with this approach we can take advantage of the fact $\varphi\in [-1,1]$ and drop the additional assumption on coefficients as in \cite{LW}.
We recall that the Navier--Stokes--Cahn--Hilliard--Oono system with a singular potential like \eqref{pot}, which can be viewed as a subsystem of our problem \eqref{f3.c}--\eqref{ini0} by neglecting the equation for $\sigma$, has been analyzed in \cite{MT} and the existence of a global weak solution was proven there via the standard Galerkin method (cf. \cite{BGM} for the case with regular potentials). Then our well-posedness result (Theorem \ref{2main}) naturally contains the existence and uniqueness of global strong solutions for the Navier--Stokes--Cahn--Hilliard--Oono system with singular potential in two dimensions.

Second, we prove the strict separation property of global strong solutions (see Theorem \ref{phase}). More precisely, if the initial datum $\varphi_0$ satisfies $\varphi_0\in[-1+\delta_0, 1-\delta_0]$ for some $\delta_0\in (0,1)$, then for every $T>0$, there exists a $\delta=\delta(T)>0$ such that $\|\varphi(t)\|_{L^{\infty}} \leq 1-\delta$ for all $t\in[0, T]$. Furthermore, with some additional assumptions, we can derive the uniform phase separation property on $[0,+\infty)$ such that there exists a $\delta\in(0,\delta_0]$, it holds $\|\varphi(t)\|_{L^{\infty}} \leq 1-\delta$ for all $t\geq 0$. The above property is usually called \textit{separation from pure states} in the literature. It is crucial for the study on Cahn--Hilliard equation with singular potential, since it enables people to overcome the singularity due to the potential $\varPsi$ and then obtain higher order regularity of solutions (see, for instance, \cite{A2007,CMZ,MZ04} for the Cahn--Hilliard equation, and see \cite{A2009,CG,G2019,GGW,GMT} for systems consisting of the Cahn--Hilliard equation coupled with different types of fluid interactions). For the Cahn--Hilliard--Oono equation with singular potential, the authors of \cite{GGM2017} showed that in dimension two, if the initial datum is not a pure state then the weak solution stays uniformly away from the pure states $\pm 1$ from a certain time on (and of course, the same holds for strong solutions). They presented an alternative proof that is different from those in \cite{A2007,MZ04} and can be extended to study more complicated systems \cite{CG,G2019,GGW,GMT}. Concerning the Navier--Stokes--Cahn--Hilliard--Oono system, as pointed out in \cite[Remarks 3.1, 3.2]{MT}, the simple linear term $\alpha(\varphi-c_0)$ in \eqref{f1.a} may cause the system to lose the conservation of mass when $\alpha>0$ (see \eqref{mph2}), which together with the coupling with the Navier--Stokes equations, yields essential difficulties for people to obtain (at least in two spatial dimensions) the separation property as in \cite{MZ04}. From the technical point of view, differentiation of the convective Cahn--Hilliard equation \eqref{f1.a} with respect to time leads to difficulties associated with the time derivative of the velocity $\partial_t\bm{v}$ and the extra singular term $\left(\varPsi^{\prime}\left(\varphi\right) \partial_t\overline{\varphi}, \partial_t (\varphi- \overline{\varphi})\right)$ in the corresponding energy estimates. We shall resolve this problem by extending the arguments in \cite{GGM2017,GMT}. Besides, it is worth noting that for our problem \eqref{f3.c}--\eqref{ini0}, some additional difficulties arise from the coupling with the nutrient equation for $\sigma$.
 In \cite{GGM2017,GMT}, an essential step to derive the separation property of $\varphi$ is to prove the singular derivative $\varPsi'(\varphi)\in L^{\infty}(\Omega\times (0, T))$ by using $\mu\in L^{\infty}(0, T ; H^2(\Omega))$ (see e.g., \cite[Section 5]{GGM2017} and \cite[Section 4]{GMT}, see also \cite{CG,GGW}). Following this strategy, for our problem \eqref{f3.c}--\eqref{ini0}, we need to show $\mu+\chi\sigma\in L^{\infty}(0, T ; H^2(\Omega))$. However, the regularity $\sigma\in L^{\infty}(0, T; H^2(\Omega))$ is not available from the nutrient equation \eqref{f2.b} if we only assume $\sigma_0\in H^1(\Omega)$. To overcome this difficulty, we employ a different approach, by exploiting the estimate for an elliptic problem with singular nonlinear term (see Lemma \ref{se} below). Indeed, only by making use of the fact $\mu+\chi\sigma\in L^{\infty}(0, T; H^1(\Omega))$, we are able to deduce that $\varPsi'(\varphi)\in L^{\infty}(0, T; W^{1,q}(\Omega))$ for any $q\in[2,+\infty)$, which implies $\varPsi'(\varphi)\in L^{\infty}(0, T ; L^{\infty}(\Omega))$ thanks to the Sobolev embedding theorem in two dimensions. This together with the assumption that $\varPsi'$ diverges at $\pm 1$ and the known regularity $\varphi\in L^\infty(0,T; C(\overline{\Omega}))$ yields the strict separation of $\varphi$ from the pure states $\pm 1$.

 Finally, the separation property of strong solutions paves the road for two further results. We note that
 with the separation property, the Cahn--Hilliard equation with singular potential can be regarded as a parabolic equation with a globally Lipschitz smooth potential. This allows us to derive the continuous dependence on the initial data for strong solutions in higher-order norms (see Corollary \ref{2main1} and cf. \cite{H} for the continuous dependence estimate of global weak solutions). On the other hand, thanks to the parabolic nature of the system, we are able to show the propagation of regularity for any global weak solution to problem \eqref{f3.c}--\eqref{ini0} and in particular, the instantaneous separation property from pure states $\pm 1$ for positive time (see Corollary \ref{regw}).

 Neglecting the interaction with nutrient and setting $\alpha=0$, our system \eqref{f3.c}--\eqref{f4.d} reduces to the well-known ``Model H" for the dynamics of incompressible viscous two-phase flows \cite{HH, Gur}, which has been extensively analyzed in the literature under various assumptions on fluid viscosity, mobility and potential function. For instance, we refer to \cite{B,GG2010,LS,ZWH} for the Navier--Stokes--Cahn--Hilliard system with regular potentials, and to \cite{A2009,B,GMT} for the case with singular (e.g., logarithmic) potentials. The system for two-phase fluids with nonlocal effect due to Oono's interaction was analyzed in \cite{BGM,MT}. On the other hand, when the fluid interaction in system \eqref{f3.c}--\eqref{f4.d} is neglected, we refer to \cite{GL17e} for the well-posedness of a Cahn--Hilliard type system with chemotaxis and active transport (see \cite{GL17} for the case with Dirichlet boundary conditions) and to \cite{MRS} for long-time behavior without transport mechanisms. At last, we would like to mention some recent contributions
\cite{DGG,DG,FGG16,FGGS,FG12,FGR15,GGG17} on the nonlocal Cahn--Hilliard equation and its coupled systems for two-phase fluids. The nonlocal variant of  system \eqref{f3.c}--\eqref{f4.d} would be an interesting subject of future study.

 The remaining part of this paper is organized as follows. In Section \ref{pm}, we introduce the functional settings and state the main results.  Section \ref{ws} is devoted to the existence and uniqueness of global strong solutions in two dimensions. In Section \ref{phs}, we prove the strict  separation property for the strong solution $\varphi$ on an arbitrary interval $[0, T]$ as well as a uniform phase separation property on $[0,+\infty)$ under some additional assumptions on the coefficients of the system. In Section \ref{cd}, we address the continuous dependence with respect to the initial data of the strong solutions. In the last Section \ref{phsw}, we prove the instantaneous regularity of weak solutions.

\section{Main Results}\label{pm}
\setcounter{equation}{0}
\subsection{Preliminaries}
We assume that $\Omega \subset\mathbb{R}^2$ is a bounded domain with smooth boundary $\partial\Omega$ and $T>0$ is a fixed final time. For the standard Lebesgue and Sobolev spaces, we use the notations $L^{p} := L^{p}(\Omega)$ and $W^{k,p} := W^{k,p}(\Omega)$ for any $p \in [1,+\infty]$ and $k > 0$, equipped with the norms $\|\cdot\|_{L^{p}}$ and $\|\cdot\|_{W^{k,p}}$.  When $p = 2$, we use $H^{k} := W^{k,2}$ and the norm $\|\cdot\|_{H^{k}}$. The norm and inner product on $L^{2}(\Omega)$ are simply denoted by $\|\cdot\|$ and $(\cdot,\cdot)$, respectively.
The dual space of a Banach space $X$ is denoted by $X'$, and the duality pairing between $X$ and $X'$ will be denoted by
$\langle \cdot,\cdot\rangle_{X',X}$. Given an interval $J$ of $\mathbb{R}^+$, we introduce
the function space $L^p(J;X)$ with $p\in [1,+\infty]$, which
consists of Bochner measurable $p$-integrable
functions with values in the Banach space $X$.
The boldface letter $\bm{X}$ denotes the vectorial space $X^d$ endowed with the product structure.

For every $f\in H^1(\Omega)'$, we denote by $\overline{f}$ its generalized mean value over $\Omega$ such that
$\overline{f}=|\Omega|^{-1}\langle f,1\rangle_{(H^1)',\,H^1}$. If $f\in L^1(\Omega)$, then its mean is simply given by $\overline{f}=|\Omega|^{-1}\int_\Omega f \,dx$.
  We will use the Poincar\'{e}--Wirtinger inequality
\begin{equation}
\label{poincare}
\|f-\overline{f}\|\leq C_P\|\nabla f\|,\quad \forall\,
f\in H^1(\Omega),
\end{equation}
where $C_P$ is a constant depending only on $\Omega$. Then we
see that $f\to (\|\nabla f\|^2+|\overline{f}|^2)^\frac12$ is an equivalent norm on $H^1(\Omega)$. We introduce the space $L^2_{0}(\Omega):=\{f\in L^2(\Omega):\overline{f} =0\}$ for $L^2$ functions with zero mean and set
$H^2_{N}(\Omega):=\{f\in H^2(\Omega):\,\partial_{\bm{n}}f=0 \ \textrm{on}\  \partial \Omega\}$ for $H^2$ functions subject to homogeneous Neumann boundary condition. The Ladyzhenskaya and Agmon's inequalities in two dimensions will be used in the subsequent estimates
\begin{align*}
&\|f\|_{L^4}\leq C\|f\|_{H^1}^\frac12\|f\|^\frac12,\qquad \forall\,f\in H^1(\Omega),\\
&\|f\|_{L^\infty}\leq  C\|f\|_{H^2}^\frac12\|f\|^\frac12,\qquad \forall\,f\in H^2(\Omega).
\end{align*}

Next, we recall some function spaces for the Navier--Stokes equations (see e.g., \cite{S}). Let $\bm{C}^\infty_{0,\mathrm{div}}(\Omega)$ be the space of divergence free vector fields in $(C^\infty_0(\Omega))^2$. We define $\bm{L}^2_{0,\mathrm{div}}(\Omega)$ and $\bm{H}^1_{0,\mathrm{div}}(\Omega)$ as the closure of $\bm{C}^\infty_{0,\mathrm{div}}(\Omega)$ with respect to the $\bm{L}^2$ and $\bm{H}^1_0$ norms, respectively. The space $\bm{H}^1_{0,\mathrm{div}}(\Omega)$ is equipped with the scalar product
 $(\bm{u},\bm{v})_{\bm{H}^1_{0,\mathrm{div}}}:=(\nabla \bm{u},\nabla \bm{v})$ for all $\bm{u},\, \bm{v}  \in {\bm{H}^1_{0,\mathrm{div}}(\Omega)}$ and the norm $\|\bm{u}\|_{\bm{H}^1_{0,\mathrm{div}}}=\|\nabla \bm{u}\|$. The classical Korn's inequality
 entails that $\|\nabla \bm{u}\| \leq \sqrt{2} \|D\bm{u}\|\leq \sqrt{2}\|\nabla \bm{u}\|$ for all $\bm{u}\in \bm{H}^1_{0,\mathrm{div}}(\Omega)$.
 It is well known that $\bm{L}^2(\Omega)$ can be decomposed into $\bm{L}^2_{0,\mathrm{div}}(\Omega)\oplus\bm{G}(\Omega)$, where $\bm{G}(\Omega):=\{\bm{f}\in\bm{L}^2(\Omega): \exists\, g\in H^1(\Omega),\ \bm{f}=\nabla g\}$. Then for any function $\bm{f} \in \bm{L}^2(\Omega)$, there holds the Helmholtz--Weyl decomposition:
$\bm{f}=\bm{f}_{0}+\nabla g$ where  $\bm{f}_{0} \in \bm{L}^2_{0,\mathrm{div}}(\Omega)$ and $\nabla g \in \bm{G}(\Omega)$. Consequently, one can define the Helmholtz--Leray projection onto the space of divergence-free functions $\bm{P}:\bm{L}^2(\Omega)\to \bm{L}^2_{0,\mathrm{div}}(\Omega)$ such that $\bm{P}(\bm{f})=\bm{f}_{0}$. Recall now the Stokes operator $\bm{S}: \bm{H}^1_{0,\mathrm{div}}(\Omega)\cap\bm{H}^2(\Omega)\to\bm{L}^2_{0,\mathrm{div}}(\Omega)$ such that
\be
(\bm{S}\bm{u},\bm{\zeta})=(\nabla \bm{u},\nabla\bm{\zeta}),\quad  \forall\, \bm{\zeta} \in \bm{H}^1_{0,\mathrm{div}}(\Omega),\nonumber
 \ee
 with domain $D(\bm{S})= \bm{H}^1_{0,\mathrm{div}}(\Omega)\cap\bm{H}^2(\Omega)$ (see e.g., \cite[Chapter III]{S}). Besides, there exists a constant $C>0$ such that $\|\bm{u}\|_{\bm{H}^2}\leq \|\bm{S} \bm{u}\|$ for any $\bm{u}\in \bm{H}^1_{0,\mathrm{div}}(\Omega)\cap\bm{H}^2(\Omega)$. The operator $\bm{S}$ is a canonical isomorphism from $\bm{H}^1_{0,\mathrm{div}}(\Omega)$ to $\bm{H}^1_{0,\mathrm{div}}(\Omega)'$.
 Denote its inverse map by $\bm{S}^{-1}:\bm{H}^1_{0,\mathrm{div}}(\Omega)'\to\bm{H}^1_{0,\mathrm{div}}(\Omega)$. For any $\bm{f}\in \bm{H}^1_{0,\mathrm{div}}(\Omega)'$, there is a unique $\bm{u}=\bm{S}^{-1}\bm{f}\in\bm{H}^1_{0,\mathrm{div}}(\Omega)$ such that
\be
(\nabla\bm{S}^{-1}\bm{f},\nabla \bm{\zeta})=\langle\bm{f},\bm{\zeta}\rangle_{(\bm{H}^1_{0,\mathrm{div}})',\,\bm{H}^1_{0,\mathrm{div}}},\quad \forall\, \bm{\zeta} \in \bm{H}^1_{0,\mathrm{div}}(\Omega),\nonumber
\ee
and $\|\nabla\bm{S}^{-1}\bm{f}\|=\langle\bm{f},\bm{S}^{-1}\bm{f} \rangle_{(\bm{H}^1_{0,\mathrm{div}})',\,\bm{H}^1_{0,\mathrm{div}}}^{\frac{1}{2}}$ is an equivalent norm on $\bm{H}^1_{0,\mathrm{div}}(\Omega)'$.
% and there exists the chain rule
%\be
%\langle\bm{f}_{t}(t),\bm{S}^{-1}\bm{f}(t)\rangle_{\bm{H}^1_{0,\mathrm{div}}}=\frac{1}{2}\frac{d}{dt}\|\nabla\bm{S}^{-1}\bm{f}\|^2,\quad   \textrm{a.e.}\ t \in (0,T),\nonumber
%\ee
%for any $\bm{f}\in H^1(0,T;\bm{H}^1_{0,\mathrm{div}}(\Omega)')$.
Finally, we recall the following regularity result (see e.g.,  \cite[Chapter III, Theorem 2.2.1]{S} and \cite[Appendix B]{GMT}):
\bl \label{stokes}
 \rm Let $\Omega \subset\mathbb{R}^2$ be a bounded smooth domain. For any $\bm{f} \in \bm{L}^2_{0,\mathrm{div}}(\Omega)$,
there exists a unique pair $\bm{u}\in \bm{H}^1_{0,\mathrm{div}}(\Omega)\cap\bm{H}^2(\Omega)$ and $p\in H^1(\Omega)\cap L_0^2(\Omega)$ such that $-\Delta \bm{u}+\nabla p=\bm{f}$ a.e. in $\Omega$, that is, $\bm{u}=\bm{S}^{-1}\bm{f}$. Moreover,
\begin{align*}
&\|\bm{u}\|_{\bm{H}^2}+\|\nabla p\|\le C\|\bm{f}\|,\quad
\|p\|\le C \|\bm{f}\|^\frac12\|\nabla \bm{S}^{-1}\bm{f}\|^\frac12,
 \end{align*}
where $C$ is a positive constant that may depend on $\Omega$ but is independent of $\bm{f}$.
 \el

Throughout this paper, the symbols $C$, $C_i$, $i\in \mathbb{N}$, denote generic positive constants that may depend on  coefficients of the system, norms of the initial data, $\Omega$ and $T$. Their values may change from line to line and specific dependencies will be pointed out when necessary.

 %%%%%%%%%%%%%%%%%
\subsection{Assumptions}
\noindent
We introduce some basic hypotheses for problem  \eqref{f3.c}--\eqref{ini0} (cf. \cite{H,GMT,GGW}).
\begin{enumerate}
	\item[(H1)]\label{seta} The fluid viscosity $\eta$ satisfies  $\eta \in C^{2}(\mathbb{R})$  and
	$\eta_{*} \leq \eta(r)\leq \eta^*$ for $r \in \mathbb{R}$, where $\eta_{*}<\eta^*$ are positive constants.
	\item[(H2)]\label{item:as} The singular potential $\varPsi$ belongs to the class of functions $C[-1,1]\cap C^{3}(-1,1)$ and can be written into the following form
	\begin{equation}
	\varPsi(r)=\varPsi_{0}(r)-\frac{\theta_{0}}{2}r^2,\nonumber
	\end{equation}
	such that
	\begin{equation}
	\lim_{r\to \pm 1} \varPsi_{0}'(r)=\pm \infty ,\quad \text{and}\ \  \varPsi_{0}''(r)\ge \theta,\quad \forall\, r\in (-1,1),\nonumber
	\end{equation}
	where $\theta$ is a strictly positive constant and $\theta_0\in \mathbb{R}$. We make the extension $\varPsi_{0}(r)=+\infty$ for any $r\notin[-1,1]$.
	In addition, there exists $\epsilon_0\in(0,1)$ such that $\varPsi_{0}''$ is nondecreasing in $[1-\epsilon_0,1)$ and nonincreasing in $(-1,-1+\epsilon_0]$.
	\item[(H3)] The convex potential $\varPsi_0$ satisfies the following growth assumption on its derivatives
	\be \varPsi_0^{\prime \prime}(z) \leq C \mathrm{e}^{C\left|\varPsi_0^{\prime}(z)\right|}, \quad \forall\, z \in(-1,1). \label{de2}\ee
	 for some positive constant  $C$.
	\item[(H4)] The functions $h$, $S$ satisfy $h\in C^1(\mathbb{R})\cap L^\infty(\mathbb{R})$ and $S\in L^2(0,T; L^2(\Omega))$.
	\item[(H5)]\label{sco} The coefficients $A,\ B,\ \mathcal{C},\ \chi,\ \alpha,\ c_0$ are  prescribed  constants and fulfill
	\be
	A>0,\ \ B>0,\ \ \mathcal{C}\in\mathbb{R},\ \ \chi \in \mathbb{R},\ \ \alpha\geq 0,\ \ c_0\in(-1,1). \nonumber
	\ee
\end{enumerate}
\begin{remark}
	(1) As indicated in \cite[Remark 2.1]{GMT}, one can easily extend the linear viscosity function \eqref{vis} to $\mathbb{R}$ in such a way to comply $\mathrm{(H1)}$. Indeed, since the singular potential $\varPsi$ guarantees that the solution $\varphi\in [-1,1]$, the value of $\eta$ outside of $[-1,1]$ is not important and can be chosen in a good manner as in $\mathrm{(H1)}$.
	(2) The logarithmic potential \eqref{pot} satisfies the assumptions $\mathrm{(H2)}$ and $\mathrm{(H3)}$.
(3) We note that here we have removed the assumption about the convexity of the second order derivative $\varPsi_0''$ that was previously required in \cite{GGW,GMT}.
\end{remark}

%%%%%%%%%%%%%%%
\subsection{Statement of main results}
We are now in a position to present the main results of this paper.
\begin{theorem}[Strong well-posedness] \label{2main}
	Let $\Omega \subset \mathbb{R}^{2}$ be a bounded smooth domain and $T>0$. Suppose that the hypotheses (H1)--(H5) are satisfied. For any $ \bm{v}_{0} \in \bm{H}^1_{\mathrm{0,div}}(\Omega)$, $\varphi_{0} \in H^{2}_N(\Omega)$, $\sigma_{0}\in H^1(\Omega)$ with $\left\|\varphi_{0}\right\|_{L^{\infty}} \leq 1,\ \left|\overline{\varphi}_{0}\right|<1$ and
	$\mu_{0}=A\varPsi^{\prime}\left(\varphi_{0}\right) -B\Delta \varphi_{0} -\chi\sigma_0\in H^1(\Omega)$,  there exists a unique global strong
	solution $(\bm{v}, p, \varphi, \mu, \sigma)$ to problem \eqref{f3.c}--\eqref{ini0} on $[0, T]$ such that
	\begin{align*}
	&\bm{v} \in C\left([0, T] ; \bm{H}^1_{\mathrm{0,div}}(\Omega)\right) \cap L^{2}\left(0, T ; \bm{H}^1_{\mathrm{0,div}}(\Omega)\cap\bm{H}^2(\Omega)\right) \cap H^{1}\left(0, T ;\bm{L}^2_{0,\mathrm{div}}(\Omega)\right), \\
	&p\in L^2(0,T;H^1(\Omega)),\\
	&\varphi \in C_w\left([0, T] ; H^3(\Omega)\right) \cap L^2(0,T;H^4(\Omega))\cap H^{1}(0, T ; H^1(\Omega)),\\
	& \mu\in C([0, T] ; H^1(\Omega)) \cap L^{2}\left(0, T ; H^{3}(\Omega)\right) \cap H^{1}\left(0, T ; H^1(\Omega)^{\prime}\right),\\
	&\sigma   \in C([0,T] ;H^1(\Omega))\cap L^{2}(0,T;H^2(\Omega)) \cap H^1(0,T;L^2(\Omega)),\\
	&\varphi\in L^{\infty}(\Omega\times (0,T))\ \textrm{and}\ \ |\varphi(x,t)|<1\ \ \textrm{a.e.\ in}\ \Omega\times(0,T)\notag
	\end{align*}
	The strong solution satisfies the equations \eqref{f3.c}--\eqref{f2.b} a.e. in $\Omega \times(0, T)$, the boundary conditions \eqref{boundary} a.e. on $\partial\Omega\times(0,T)$ as well as the initial conditions \eqref{ini0} a.e. in $\Omega$.
	\end{theorem}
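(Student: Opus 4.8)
The plan is to implement the semi-Galerkin scheme of \cite{H}: fix a basis $\{\bm{w}_k\}$ of $\bm{H}^1_{0,\mathrm{div}}(\Omega)$ consisting of eigenfunctions of the Stokes operator, project the Navier--Stokes equation \eqref{f3.c} onto $\bm{V}_n=\mathrm{span}\{\bm{w}_1,\dots,\bm{w}_n\}$ to obtain $\bm{v}_n$, but keep the Cahn--Hilliard system \eqref{f1.a}--\eqref{f4.d} and the nutrient equation \eqref{f2.b} \emph{exact}, with $\bm{v}_n$ as the given transport field. The coupled subproblem for $(\varphi_n,\mu_n,\sigma_n)$ with a given smooth velocity is solved by a fixed-point/Galerkin argument as in \cite{H}; crucially, the singular potential $\varPsi_0$ forces $|\varphi_n|<1$ a.e., so no artificial restriction on $A,\chi$ is needed. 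This yields a local-in-time approximate solution; the uniform estimates below make it global.

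The heart of the argument is a hierarchy of a priori estimates, uniform in $n$. \textbf{Step 1 (basic energy estimate).} Test \eqref{f3.c} with $\bm{v}_n$, \eqref{f1.a} with $\mu_n$, \eqref{f4.d} with $\partial_t\varphi_n$, and \eqref{f2.b} with $\sigma_n+\chi(1-\varphi_n)$; adding these reproduces the energy law \eqref{BEL}. Using (H2) to bound $\varPsi(\varphi_n)$ from below, (H4), the mass identity \eqref{mph2}, and Gronwall, one obtains $\bm{v}_n\in L^\infty(0,T;\bm{L}^2)\cap L^2(0,T;\bm{H}^1_{0,\mathrm{div}})$, $\varphi_n\in L^\infty(0,T;H^1)$, $\mu_n\in L^2(0,T;H^1)$, $\sigma_n\in L^\infty(0,T;L^2)\cap L^2(0,T;H^1)$, plus $\|\varPsi_0'(\varphi_n)\|_{L^1}\le C$ and hence (by the standard argument for Neumann Cahn--Hilliard, using $|\overline{\varphi}_n|\le 1-\delta_1$ from \eqref{mph2}) $\|\varPsi_0'(\varphi_n)\|_{L^1}\le C$ upgrades $\|\varphi_n\|_{H^2}$ control once $\mu_n\in L^2(0,T;L^2)$. \textbf{Step 2 (higher-order estimates).} Differentiate \eqref{f1.a} in time and test with $\mathcal{N}(\partial_t\varphi_n)$ (inverse Neumann Laplacian), or equivalently test the time-differentiated system with $\partial_t\varphi_n$ and $\partial_t\mu_n$, together with testing \eqref{f3.c} with $\partial_t\bm{v}_n$ and $\bm{S}\bm{v}_n$ and \eqref{f2.b} with $-\Delta\sigma_n$ and $\partial_t\sigma_n$. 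In two dimensions the critical nonlinear terms — $(\bm{v}_n\cdot\nabla\bm{v}_n,\partial_t\bm{v}_n)$, $(\bm{v}_n\cdot\nabla\varphi_n,\partial_t\mu_n)$, $(\mu_n+\chi\sigma_n)\nabla\varphi_n\cdot\partial_t\bm{v}_n$, $(\bm{v}_n\cdot\nabla\sigma_n,-\Delta\sigma_n)$ — are handled by Ladyzhenskaya, Agmon and Gagliardo--Nirenberg interpolation combined with the Step 1 bounds, producing a differential inequality of the form $\frac{d}{dt}\mathcal{E}_n+\mathcal{D}_n\le C(1+\mathcal{E}_n)^p\mathcal{D}_n^{\,\theta}+C$ that, via a uniform Gronwall-type lemma, gives $\bm{v}_n\in L^\infty(0,T;\bm{H}^1)\cap L^2(0,T;\bm{H}^2)\cap H^1(0,T;\bm{L}^2)$, $\mu_n\in L^\infty(0,T;H^1)\cap H^1(0,T;(H^1)')$, $\sigma_n\in L^\infty(0,T;H^1)\cap L^2(0,T;H^2)\cap H^1(0,T;L^2)$, $\partial_t\varphi_n\in L^2(0,T;H^1)$. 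Feeding $\mu_n+\chi\sigma_n=A\varPsi_0'(\varphi_n)-\theta_0\varphi_n-B\Delta\varphi_n\in L^\infty(0,T;H^1)$ into the elliptic estimate (Lemma \ref{se}) and elliptic regularity yields $\varphi_n\in L^\infty(0,T;H^3)$, $\|\varPsi_0'(\varphi_n)\|_{L^\infty(0,T;W^{1,q})}\le C$ for all $q<\infty$, and then $\varphi_n\in L^2(0,T;H^4)$; the pressure $p_n$ is recovered from Lemma \ref{stokes} in $L^2(0,T;H^1)$.

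\textbf{Step 3 (passage to the limit).} With these bounds, Aubin--Lions--Simon gives strong convergence $\bm{v}_n\to\bm{v}$ in $C([0,T];\bm{L}^2)\cap L^2(0,T;\bm{H}^1)$, $\varphi_n\to\varphi$ in $C([0,T];H^2)\cap L^2(0,T;H^3)$, $\sigma_n\to\sigma$ in $C([0,T];L^2)\cap L^2(0,T;H^1)$; the singular term passes to the limit because $\varPsi_0'(\varphi_n)\to\varPsi_0'(\varphi)$ a.e.\ with the uniform $L^\infty(0,T;L^q)$ bound (Vitali), and the constraint $|\varphi|<1$ a.e.\ survives. The weak-$*$ limits identify all the stated regularity classes, and the continuity $\bm{v}\in C([0,T];\bm{H}^1)$, $\mu\in C([0,T];H^1)$ etc.\ follows from the standard interpolation $L^\infty\cap H^1\hookrightarrow C_w$ upgraded to strong continuity. \textbf{Step 4 (uniqueness).} Take two strong solutions with the same data, write the system for the differences $(\bm{v},\varphi,\mu,\sigma)$ (reusing symbols for differences), test the velocity difference equation with the difference, the $\varphi$-difference equation with $\mathcal{N}$(difference) or with $\mu$-difference, and the $\sigma$-difference equation with itself; since both solutions are strictly separated on $[0,T]$ (this is where Theorem \ref{phase}, proved later, or at least the bound $\varPsi_0'(\varphi_i)\in L^\infty(0,T;L^q)$ of Step 2, enters to make $\varPsi_0'$ effectively Lipschitz), all nonlinear differences are controlled by lower-order norms times available higher-order norms, and Gronwall closes with $C$ depending on the two solutions' norms.

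The main obstacle is Step 2, specifically the time-differentiated Cahn--Hilliard estimate: the Oono term makes $\overline{\varphi}_n$ time-dependent (cf.\ \eqref{mph2}), so differentiating \eqref{f1.a} produces the extra term $\alpha\,\partial_t\varphi_n$ and, more seriously, controlling $\int_\Omega\varPsi_0''(\varphi_n)|\partial_t\varphi_n|^2$ requires the lower bound $\varPsi_0''\ge\theta$ but also an upper control that in principle needs the separation property — which is not yet available at this stage. This is circumvented exactly as in \cite{H,GMT}: one does not need a pointwise bound on $\varPsi_0''(\varphi_n)$ at this step, only that $\int_\Omega\varPsi_0''(\varphi_n)|\partial_t\varphi_n|^2\ge\theta\|\partial_t\varphi_n\|^2$ provides a \emph{good} sign, while the coupling terms involving $\partial_t\bm{v}_n$, $\partial_t\sigma_n$ are absorbed using the 2D interpolation inequalities and the $L^\infty(0,T;L^q)$ bound on $\varPsi_0'(\varphi_n)$ from the elliptic Lemma \ref{se}; the regularity of $\varphi_n$ up to $H^4$ and the separation property are then obtained \emph{a posteriori}.
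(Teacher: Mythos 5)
Your overall strategy---semi-Galerkin in the velocity only, the basic energy law, higher-order estimates with the multipliers $\bm{S}\bm{v}$, $\partial_t\bm{v}$, $-\Delta\sigma$ and a time-derivative multiplier for the Cahn--Hilliard part, then regularity of $\varphi$ from the elliptic Lemma \ref{se}, limit passage, and uniqueness---is the same as the paper's. Two points in your Step 2, however, must be fixed before the argument actually closes. First, the paper does \emph{not} differentiate \eqref{f1.a} in time; it tests the undifferentiated equation with $\partial_t\mu$. These are not interchangeable here: when $\alpha>0$ and $\overline{\varphi_0}\neq c_0$, $\partial_t\varphi$ has nonzero mean by \eqref{mph2}, so $\mathcal{N}(\partial_t\varphi)$ is undefined without subtracting the mean, and the resulting cross term $\overline{\partial_t\varphi}\int_\Omega\varPsi''(\varphi)\,\partial_t\varphi\,dx$, together with the term containing $\partial_t\bm{v}\cdot\nabla\varphi$, is exactly the difficulty the paper flags in the introduction. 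The $\partial_t\mu$ route avoids it: one gets $(\partial_t\varphi,\partial_t\mu)=B\|\nabla\partial_t\varphi\|^2+A(\varPsi''(\varphi)\partial_t\varphi,\partial_t\varphi)-\chi(\partial_t\varphi,\partial_t\sigma)$, uses only the lower bound $\varPsi''\ge\theta-\theta_0$, absorbs $\|\partial_t\varphi\|^2$ by interpolating between $\|\nabla\partial_t\varphi\|$ and $\|\partial_t\varphi\|_{(H^1)'}$ via the zero-mean identity \eqref{mut}, and handles the convection term by writing $(\bm{v}\cdot\nabla\varphi,\partial_t\mu)=\frac{d}{dt}(\bm{v}\cdot\nabla\varphi,\mu)-(\partial_t\bm{v}\cdot\nabla\varphi,\mu)-(\bm{v}\cdot\nabla\partial_t\varphi,\mu)$, the middle term being absorbed by the $\|\partial_t\bm{v}\|^2$ dissipation from the velocity estimate. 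You should commit to that route rather than present the two as equivalent.

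Second, the differential inequality you write, $\frac{d}{dt}\mathcal{E}_n+\mathcal{D}_n\le C(1+\mathcal{E}_n)^p\mathcal{D}_n^{\,\theta}+C$, does not close globally: after Young's inequality it becomes $\frac{d}{dt}\mathcal{E}_n\le C(1+\mathcal{E}_n)^{p/(1-\theta)}$, which for $p/(1-\theta)>1$ only yields a bound up to some finite blow-up time. The mechanism that actually works in 2D is that the higher-order inequality is \emph{linear} in the higher-order energy, $\frac{d}{dt}\Lambda_1+\mathcal{M}\le\mathcal{R}_1\Lambda_2+\mathcal{R}_2$ with $\Lambda_2$ comparable to $\Lambda_1$ and with coefficients $\mathcal{R}_1=C\big(1+\|\nabla\bm{v}\|^2+\|\varphi\|_{H^2}^4+\|\nabla\sigma\|^2\big)$ and $\mathcal{R}_2$ lying in $L^1(0,T)$ precisely because of the Step 1 bounds; Gronwall with an integrable coefficient then gives the global estimate. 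Your sketch contains all the ingredients for this, but the inequality as stated would not do the job, so this is the one genuine gap to repair. Two smaller remarks: the $C_w([0,T];H^3)\cap L^2(0,T;H^4)$ regularity of $\varphi$ and the strong $C([0,T];H^1)$ continuity of $\mu$ are obtained only after the separation property, exactly as you do via Lemma \ref{se}; and for uniqueness the paper simply observes that a strong solution is a weak solution and invokes the continuous dependence estimate of \cite{H} in dual norms, which is shorter than your direct energy argument, although yours also works once separation is available.
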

\begin{remark}
Like in \cite{CG,GGW,GMT}, the admissible initial datum $\varphi_0$ has finite energy, namely, one can infer from $\|\varphi_0\|_{L^\infty}\leq 1$ that $\varPsi(\varphi_0)\in L^1(\Omega)$. On the other hand, the assumption on $\overline{\varphi_0}$ prevents the admissibility of the pure phases $\pm1$.
\end{remark}

%%%%%%%%%%%%%%%%%%%%%%%%%%%%%%%%%%%%%
Our second result concerns the strict separation property of the phase function $\varphi$, i.e., it always stays within a closed subset of $(-1, 1)$ over time.
 \begin{theorem}[Separation from pure states] \label{phase}
  Let the assumptions in Theorem \ref{2main} be satisfied.

  (1) For the global strong solution to problem \eqref{f3.c}--\eqref{ini0} obtained in Theorem \ref{2main}, there exists a constant $\delta_T\in (0,1)$ such that the phase function $\varphi$ satisfies
\be
\|\varphi(t)\|_{L^{\infty}} \leq 1-\delta_T,\quad \forall\, t \in[0,T],\label{sep1}
\ee
where $\delta_T$ may depend on norms of the initial data, coefficients of the system, $\Omega$ and $T$, but is independent of the specific initial datum.

(2) If in addition, we assume that
\be
\mathcal{C}= 0,\qquad \|S\|_{L^1(0,+\infty, L^1(\Omega))}+ \sup_{t\in [0,+\infty)}\int_t^{t+1}\|S(s)\|^2\,ds\leq S_0,
\label{ass1}
\ee
where $S_0>0$ is an arbitrary but fixed constant, then there exists a constant  $\widehat{\delta}\in (0,1)$ such that
\be
\|\varphi(t)\|_{L^{\infty}} \leq 1-\widehat{\delta},\quad \forall\, t \in[0,+\infty),\label{2sep1}
\ee
in particular,  $\widehat{\delta}$ is independent of time.
\end{theorem}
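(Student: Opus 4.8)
The plan is to follow the strategy outlined in the introduction, namely to establish $\varPsi'(\varphi)\in L^\infty(0,T;W^{1,q}(\Omega))$ for every $q\in[2,\infty)$ via an elliptic estimate with singular nonlinearity (Lemma \ref{se}), and then invoke the two-dimensional Sobolev embedding $W^{1,q}\hookrightarrow L^\infty$ together with the divergence of $\varPsi'_0$ at $\pm1$ to conclude the uniform bound. First I would collect the regularity already available from Theorem \ref{2main}: in particular $\mu\in C([0,T];H^1(\Omega))$ and $\sigma\in C([0,T];H^1(\Omega))$, so that $\mu+\chi\sigma\in L^\infty(0,T;H^1(\Omega))$, and $\varphi\in C_w([0,T];H^3(\Omega))\subset L^\infty(0,T;C(\overline\Omega))$. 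Rewriting the chemical potential relation \eqref{f4.d} as $A\varPsi'_0(\varphi)-B\Delta\varphi=\mu+\chi\sigma+A\theta_0\varphi=:f$, we have a semilinear elliptic problem for $\varphi$ with homogeneous Neumann data and right-hand side $f\in L^\infty(0,T;H^1(\Omega))$ (using $\varphi\in H^3$, hence $\varphi\in H^1$ with room to spare). The separation property on an interval $[0,T]$ — part (1) — then follows by applying Lemma \ref{se} pointwise in time: testing the elliptic equation suitably (multiplying by $\varPsi'_0(\varphi)|\varPsi'_0(\varphi)|^{q-2}$ and integrating by parts, using $\varPsi''_0\ge\theta>0$ to control the gradient term) yields $\|\varPsi'_0(\varphi(t))\|_{L^q}\le C(q)\,(1+\|f(t)\|_{L^\infty(0,T;H^1)})$ uniformly in $t\in[0,T]$ and, after a further bootstrap, $\|\nabla\varPsi'_0(\varphi(t))\|_{L^q}\le C(q)$; choosing $q>2$ and embedding gives $\|\varPsi'_0(\varphi(t))\|_{L^\infty}\le M_T$ for all $t\in[0,T]$, whence by $\lim_{r\to\pm1}|\varPsi'_0(r)|=+\infty$ and continuity there is $\delta_T\in(0,1)$ with $|\varphi(x,t)|\le 1-\delta_T$. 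The dependence of $\delta_T$ only on data norms, $\Omega$ and $T$ (and not on the specific initial datum) is inherited from the fact that the energy and higher-order bounds in Theorem \ref{2main} depend only on those quantities.

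For part (2), the uniform-in-time separation, the key point is to upgrade the bound $\|\mu+\chi\sigma\|_{L^\infty(0,T;H^1)}\le C_T$ to one with $C$ \emph{independent of $T$}. Here the extra hypothesis \eqref{ass1} is decisive: with $\mathcal C=0$ the nutrient equation \eqref{f2.b} becomes a pure advection-diffusion equation with source $S$ and no reaction sink, and the mass control $\|S\|_{L^1(0,\infty;L^1)}\le S_0$ together with $\sup_t\int_t^{t+1}\|S(s)\|^2\,ds\le S_0$ furnishes dissipative (uniform-in-time) estimates. The plan here is: (i) revisit the basic energy law \eqref{BEL}, which with $\mathcal C=0$ loses its bad reaction term, and combine it with a uniform Gronwall / uniform-in-time integration argument (of the type in \cite{GGM2017,MT}) to get $\|\bm v\|_{L^\infty(0,\infty;\bm L^2)}$, $\|\varphi\|_{L^\infty(0,\infty;H^1)}$, $\|\sigma\|_{L^\infty(0,\infty;L^2)}$ and the corresponding dissipation integrals $\int_t^{t+1}(\|\nabla\mu\|^2+\|\nabla(\sigma+\chi(1-\varphi))\|^2+\|\nabla\bm v\|^2)\,ds$ bounded uniformly; (ii) using these, derive uniform-in-time $H^1$ bounds for $\bm v$, $H^2$ for $\varphi$ (hence $\varphi\in L^\infty(0,\infty;C(\overline\Omega))$), and $H^1$ for $\mu$ and $\sigma$, by differentiating the equations in time and applying the uniform Gronwall lemma, exactly mirroring Section \ref{ws} but tracking $T$-independence; (iii) feed $\|\mu+\chi\sigma\|_{L^\infty(0,\infty;H^1)}\le C$ back into the elliptic argument of part (1). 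Since Lemma \ref{se} gives $\|\varPsi'_0(\varphi(t))\|_{W^{1,q}}\le C(q)$ with a constant depending only on the $H^1$ norm of the right-hand side and on $q$, one obtains $\|\varPsi'_0(\varphi(t))\|_{L^\infty}\le \widehat M$ uniformly for all $t\ge0$, and hence $\widehat\delta\in(0,1)$ with $|\varphi(x,t)|\le 1-\widehat\delta$ for all $t\ge0$.

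The main obstacle I anticipate is step (ii) of part (2): obtaining \emph{uniform-in-time} higher-order estimates. On a finite interval these are available from Theorem \ref{2main}, but the Gronwall-type arguments used there typically produce constants growing (possibly exponentially) in $T$ because of the off-critical mass term $\alpha(\overline\varphi-c_0)$ and the quadratic nonlinearities coming from $(\mu+\chi\sigma)\nabla\varphi$ in the Navier--Stokes equations and from the active-transport coupling. The remedy is to use the uniform (or "uniform Gronwall") lemma rather than the classical one, which requires that the time-integrated forcing over every unit window be controlled uniformly — this is exactly what the dissipation integrals from (i) and the assumption \eqref{ass1} provide. One must be careful that when differentiating \eqref{f1.a} in time the troublesome terms $\partial_t\bm v$ and the singular contribution involving $\partial_t\overline\varphi$ (which decays like $e^{-\alpha t}$ when $\alpha>0$, and vanishes identically when $\overline{\varphi_0}=c_0$) are absorbable; the decay of $\partial_t\overline\varphi$ actually helps in the uniform-in-time setting. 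A secondary technical point is verifying that the constant in the singular elliptic estimate (Lemma \ref{se}) genuinely depends only on $\|f\|_{H^1}$ and $q$ (and on $\Omega$, $\overline\varphi$ through the fixed interval $|\overline{\varphi_0}|<1$ and \eqref{mph2}), with no hidden dependence on $T$ — this should be clear from its statement but must be checked against the precise hypotheses on $\varPsi_0$, in particular the growth condition (H3) which is what allows passing from the $W^{1,q}$ bound on $\varPsi'_0(\varphi)$ to an $L^\infty$ bound and thence to the separation.
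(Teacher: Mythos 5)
Your proposal is correct and follows essentially the same route as the paper: part (1) is obtained by viewing \eqref{f4.d} as the elliptic problem \eqref{ellsol} with right-hand side bounded in $L^\infty(0,T;H^1(\Omega))$ via \eqref{muphi} and invoking Lemma \ref{se}, and part (2) is obtained by first deriving uniform-in-time dissipative low-order estimates from the energy law under \eqref{ass1} and then upgrading the higher-order differential inequality \eqref{2menergy} with the uniform Gronwall lemma to get a $t$-independent bound on $\|\mu\|_{H^1}+\|\sigma\|_{H^1}$, after which Lemma \ref{se} applies again. The only cosmetic difference is that the paper does not literally differentiate the equations in time for the higher-order bounds but re-uses the energy functional $\Lambda_1$ built from the multipliers $\bm{S}\bm{v}$, $\partial_t\bm{v}$, $\partial_t\mu$ and $-\Delta\sigma$; your anticipated obstacles (uniform Gronwall in place of classical Gronwall, the role of (H3) in passing from $W^{1,q}$ to $L^\infty$) are exactly the points the paper addresses.
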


Thanks to the separation property obtained in Theorem \ref{phase}, we can derive a refined continuous dependence estimate for the strong solutions with respect to initial data in the energy space.
 \begin{corollary}[Continuous dependence]
 \label{2main1}
 	Let the assumptions in Theorem \ref{2main} be satisfied. Consider two sets of initial data satisfying  $\bm{v}_{0i}\in\bm{H}^1_{0,\mathrm{div}}(\Omega)$, $\varphi_{0i}\in H^2_N(\Omega)$, $\sigma_{0i}\in H^1(\Omega)$ with $\left \|  \varphi_{0i}\right \|_{L^{\infty}} \le 1$, $|\overline{\varphi}_{0i}|<1$, and 	$\mu_{0i}=A\varPsi^{\prime}\left(\varphi_{0i}\right) -B\Delta \varphi_{0i} -\chi\sigma_{0i}\in H^1(\Omega)$, and two source terms $S_i$, $i=1,\, 2$.
 	Denote the corresponding global strong solutions to problem \eqref{f3.c}--\eqref{ini0} by $(\bm{v}_{i},\varphi_{i},\mu_i,\sigma_{i})$, $i=1,\, 2$. Then we have the following estimate:
 	 \begin{align}
&\|\boldsymbol{v}_1(t)-\boldsymbol{v}_2(t)\|^{2}
+\|\varphi_2(t)-\varphi_2(t)\|^{2}_{H^1}
+\|\sigma_1(t)-\sigma_2(t)\|^{2}\nonumber \\
&\quad \le C_T\Big(\|\boldsymbol{v}_{01}-\boldsymbol{v}_{02}\|^{2}
+\|\varphi_{01}-\varphi_{02}\|^{2}_{H^1}
+\|\sigma_{01}-\sigma_{02}\|^{2}+\int_0^t\|S_1(s)-S_2(s)\|^2\,ds \Big)\label{conti1}
 	\end{align}
 	for all $t\in [0,T]$, where $C_T$ is a positive constant depending on norms of the initial data, coefficients of the system, $\Omega$ and $T$.
 	\end{corollary}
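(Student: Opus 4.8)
The plan is to derive the continuous-dependence estimate \eqref{conti1} via an energy method applied to the system satisfied by the differences. Set $\bm{v}=\bm{v}_1-\bm{v}_2$, $\varphi=\varphi_1-\varphi_2$, $\mu=\mu_1-\mu_2$, $\sigma=\sigma_1-\sigma_2$, $S=S_1-S_2$, and subtract the two copies of \eqref{f3.c}--\eqref{f2.b}. The key simplification—made available by Theorem \ref{phase}—is that both solutions are strictly separated, so on $[0,T]$ there is $\delta_T\in(0,1)$ with $\|\varphi_i(t)\|_{L^\infty}\le 1-\delta_T$; consequently $\varPsi'$ restricted to $[-1+\delta_T,1-\delta_T]$ is Lipschitz with a constant $L_{\delta_T}$, and $\varPsi''$ is bounded there. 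Thus the singular term is effectively a globally Lipschitz nonlinearity and the awkward mean-value/singularity arguments of the weak uniqueness proof in \cite{H} can be replaced by direct estimates in stronger norms.

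First I would test the difference of the Navier--Stokes equations with $\bm{v}$, the difference of \eqref{f1.a} with $\mu$, the difference of \eqref{f4.d} (i.e. $\mu = A\varPsi'(\varphi_1)-A\varPsi'(\varphi_2) - B\Delta\varphi - \chi\sigma$) with $\partial_t\varphi$ (equivalently with $\Delta\mu - \alpha(\varphi-\overline{\varphi})$ after accounting for the Oono term and the decay of $\overline{\varphi}$), and the difference of \eqref{f2.b} with $\sigma$. Adding these, the convective terms $\bm{v}_1\cdot\nabla(\cdot)$ produce trilinear terms of the form $(\bm{v}\cdot\nabla f_2, g)$ which are handled by Ladyzhenskaya's inequality together with the regularity $\bm{v}_2\in L^2(0,T;\bm{H}^2)$, $\varphi_2\in L^\infty(0,T;H^3)$, $\sigma_2\in L^2(0,T;H^2)$ from Theorem \ref{2main}; the capillary/Korteweg forcing difference $(\mu_1+\chi\sigma_1)\nabla\varphi_1 - (\mu_2+\chi\sigma_2)\nabla\varphi_2$ splits into a part proportional to $\nabla\varphi$ and a part proportional to the velocity-side quantities, again closed by the high regularity of the second solution. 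The viscosity difference $\mathrm{div}(2(\eta(\varphi_1)-\eta(\varphi_2))D\bm{v}_2)$ is controlled using $|\eta(\varphi_1)-\eta(\varphi_2)|\le \|\eta'\|_\infty|\varphi|$ and $D\bm{v}_2\in L^2(0,T;\bm{H}^1)$. The chemical-potential equation contributes $B\|\nabla\varphi\|^2$ plus $-A(\varPsi'(\varphi_1)-\varPsi'(\varphi_2),\Delta\varphi)$; here the separation property lets me bound this last term by $A L_{\delta_T}\|\varphi\|\,\|\Delta\varphi\|$ and absorb part of it, or alternatively test \eqref{f4.d}-difference with $\varphi$ to gain an $H^1$-type control of $\varphi$ cleanly. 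The reaction terms $\mathcal{C}(h(\varphi_1)\sigma_1-h(\varphi_2)\sigma_2)$ and $\alpha(\varphi-\overline\varphi)$ are lower-order and handled by $h\in C^1\cap L^\infty$.

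Carrying this through, I expect to arrive at a differential inequality of the shape
\[
\frac{d}{dt}\,\mathcal{E}(t) + \kappa\big(\|\nabla\bm{v}\|^2+\|\Delta\varphi\|^2 + \|\nabla\sigma\|^2\big) \le \beta(t)\,\mathcal{E}(t) + \|S(t)\|^2,
\]
where $\mathcal{E}(t)=\tfrac12\|\bm{v}(t)\|^2+\tfrac{c_1}{2}\|\varphi(t)\|_{H^1}^2+\tfrac12\|\sigma(t)\|^2$ (with a suitable constant $c_1$ so that the $H^1$ norm of $\varphi$ appears—note one must be slightly careful to control $\|\varphi\|$ itself, which can be done by testing the $\varphi$-difference equation, using $\overline{\varphi}(t)=\overline{\varphi}(0)e^{-\alpha t}$ and the $H^1$ duality norm), and $\beta\in L^1(0,T)$ collects all the time-dependent coefficients coming from the $L^2(0,T)$-norms of $\|\bm{v}_2\|_{\bm{H}^2}$, $\|\sigma_2\|_{H^2}$, etc. An application of Gronwall's lemma then yields \eqref{conti1} with $C_T$ depending on $\|\beta\|_{L^1(0,T)}$, hence on the norms of the initial data, the coefficients, $\Omega$ and $T$.

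The main obstacle will be closing the estimate for the phase variable at the $H^1$ level: the term $-A(\varPsi'(\varphi_1)-\varPsi'(\varphi_2),\Delta\varphi)$ must be dominated by the dissipation $B\|\Delta\varphi\|^2$ together with an absorbable multiple of $\mathcal{E}$, and simultaneously one needs $\|\nabla\varphi\|^2$ to be produced (not merely $\|\Delta\varphi\|^2$) so that $\|\varphi\|_{H^1}$ is genuinely controlled—this forces a careful choice of test functions (likely a combination of testing \eqref{f1.a}-difference with $\mathcal{N}\partial_t\varphi$ or with $\mu$, and \eqref{f4.d}-difference with $\varphi$ and with $\Delta\varphi$) and bookkeeping of the constants. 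The coupling through $\chi\sigma$ in $\mu$ is where the earlier elliptic-regularity trick is no longer needed because, in the strictly separated regime, $\varPsi''(\varphi_i)$ is bounded and the system behaves like one with a smooth potential, so the estimate reduces to a by-now standard—though delicate—Navier--Stokes--Cahn--Hilliard continuous-dependence computation augmented by the (lower-order) nutrient equation.
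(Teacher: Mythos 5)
Your proposal is correct and follows essentially the same route as the paper: write the system for the differences, use the strict separation from Theorem \ref{phase} so that $\varPsi'$, $\varPsi''$, $\varPsi'''$ are bounded on $[-1+\delta_T,1-\delta_T]$ and the potential behaves like a smooth Lipschitz one, estimate the trilinear and viscosity-difference terms via the higher regularity of the second solution, and close with Gronwall from a differential inequality of exactly the shape you predict. The one point to fix is your first-listed pairing for the phase variable: testing the difference of \eqref{f1.a} with $\mu$ and of \eqref{f4.d} with $\partial_t\varphi$ does not close, because $A\left(\partial_t\varphi,\,\varPsi'(\varphi_1)-\varPsi'(\varphi_2)\right)$ is not a time derivative for the difference and $\|\partial_t\varphi\|$ is not controlled by the available dissipation; the paper instead tests the difference of \eqref{f1.a} with $\varphi-\Delta\varphi$ (which is precisely the alternative you name), expanding $(\nabla\mu,\nabla\varphi)$ and $-(\nabla\mu,\nabla\Delta\varphi)$ via the constitutive relation and using the bound on $\varPsi'''$ to handle $\varPsi''(\varphi_1)-\varPsi''(\varphi_2)$, which yields $\tfrac{d}{dt}\|\varphi\|_{H^1}^2$ together with the dissipation $\|\Delta\varphi\|^2+\|\nabla\Delta\varphi\|^2$ needed to absorb the coupling terms.
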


Finally, the parabolic nature of problem \eqref{f3.c}--\eqref{ini0} enables us to obtain some further regularities of its global weak solutions in two dimensions. To this end, we recall the following  definition (see \cite{H}):
\bd \label{maind}
Let $\Omega \subset \mathbb{R}^{2}$ and $T \in (0,+\infty)$. Suppose that the initial data satisfy $\bm {v}_{0} \in \bm {L}^2_{0,\mathrm{div}}(\Omega)$, $\varphi_{0}\in H^1(\Omega)$, $\sigma_{0}\in L^2(\Omega)$ with $\|  \varphi_{0} \|_{L^{\infty}} \le 1$ and
$|\overline{\varphi}_{0}|<1$. A quadruple $(\bm{v},\varphi,\mu,\sigma)$ satisfying the following properties
\begin{align}
	&\bm{v} \in L^{\infty}(0,T;\bm{L}^2_{0,\mathrm{div}}(\Omega)) \cap L^{2}(0,T;\bm{H}^1_{0,\mathrm{div}}(\Omega))\cap  H^{1}(0,T;\bm{H}^1_{0,\mathrm{div}}(\Omega)')\notag,\\
	&\varphi \in L^{\infty}(0,T;H^1(\Omega))\cap L^{4}(0,T;H^2_{N}(\Omega))\cap L^2(0,T;W^{2,q}(\Omega)) \cap H^{1}(0,T;H^1(\Omega)'),\notag \\
	&\mu \in   L^{2}(0,T;H^1(\Omega)),\notag \\
	&\sigma  \in L^{\infty}(0,T;L^2(\Omega))\cap L^{2}(0,T;H^1(\Omega)) \cap H^{1}(0,T;H^1(\Omega)'),\notag\\
	&\varphi\in L^{\infty}(\Omega\times (0,T))\ \textrm{and}\ \ |\varphi(x,t)|<1\ \ \textrm{a.e.\ in}\ \Omega\times(0,T),\notag
\end{align}
where $q\in [2, +\infty)$, is called a weak solution to problem \eqref{f3.c}--\eqref{ini0} on $[0,T]$, if
\begin{subequations}
	\begin{alignat}{3}
		&\left \langle\partial_t  \bm{ v},\bm{\zeta}\right \rangle_{(\bm{H}^1_{0,\mathrm{div}})',\,\bm{H}^1_{0,\mathrm{div}}}+(\bm{ v} \cdot \nabla  \bm {v},\bm{ \zeta})+(  2\eta(\varphi) D\bm{v},D\bm{ \zeta}) =((\mu+\chi \sigma)\nabla \varphi,\bm {\zeta}),\notag \\
		&\left \langle \partial_t \varphi,\xi\right \rangle_{(H^1)',\,H^1}+(\bm{v} \cdot \nabla \varphi,\xi)=- (\nabla \mu,\nabla \xi)-\alpha(\varphi-c_0,\xi),\notag \\
		&\left \langle\partial_t \sigma,\xi\right \rangle_{(H^1)',\,H^1}+(\bm{v} \cdot \nabla \sigma,\xi) + (\nabla \sigma,\nabla \xi) = \chi ( \nabla \varphi,\nabla \xi)-(\mathcal{C} h(\varphi) \sigma,\xi) + (S,\xi), \notag
	\end{alignat}
\end{subequations}
a.e. in $(0,T)$ for all $\bm {\zeta} \in \bm{H}^1_{0,\mathrm{div}}$ and $\xi \in H^1(\Omega)$, where $ \mu=A\varPsi'(\varphi)-B\Delta \varphi-\chi \sigma$ a.e. in $\Omega\times (0,T)$. Moreover, the initial conditions in \eqref{ini0} are fulfilled.
\ed
When the spatial dimension is two, the existence and uniqueness of global weak solutions have been proven in \cite[Theorems 2.1, 2.2]{H}. Based on Theorems \ref{2main}, \ref{phase}, we can show the instantaneous regularity of global weak solutions as well as the separation property for $\varphi$ on the time interval $[\tau,T]$ for any $\tau \in(0,T)$.

\begin{corollary}[Instantaneous regularity of weak solutions]
\label{regw}
Let $(\bm{v},\varphi,\mu,\sigma)$ be the global weak solution to problem \eqref{f3.c}--\eqref{ini0} on $[0,T]$ as stated in Definition \ref{maind}. Suppose that the hypotheses (H1)--(H5) are satisfied. Then for any $\tau\in(0,T)$, the weak solution satisfies those regularity properties of strong solutions as stated in Theorem \ref{2main} on the time interval $[\tau, T]$. Besides, there exists a constant $\widetilde{\delta}\in (0,1)$, such that
\be
\|\varphi(t)\|_{L^{\infty}} \leq 1-\widetilde{\delta},\quad \forall\, t \in[\tau,T],\label{sep1w}
\ee
where $\widetilde{\delta}$ may depend on $\|\bm{v}_0\|$, $\|\varphi_0\|_{H^1}$, $\|\sigma_0\|$, $\int_\Omega \varPsi(\varphi_0)dx$, coefficients of the system, $\Omega$, $T$ and $\tau$, but is independent of the specific initial datum.
\end{corollary}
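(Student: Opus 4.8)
The plan is to exploit the parabolic smoothing of the system combined with the uniqueness and strong-regularity results already established. The starting point is the weak solution $(\bm{v},\varphi,\mu,\sigma)$ on $[0,T]$ with the regularity from Definition \ref{maind}. Fix $\tau\in(0,T)$; we want to show that, after an arbitrarily small time, the weak solution has all the regularity of a strong solution. First I would use the energy law \eqref{BEL} together with the continuity-in-time properties of weak solutions to show that the map $t\mapsto \big(\bm{v}(t),\varphi(t),\sigma(t)\big)$ lies in $\bm{L}^2_{0,\mathrm{div}}(\Omega)\times H^1(\Omega)\times L^2(\Omega)$ for a.e. $t$, and in fact the (weakly continuous) trajectory visits the strong-solution initial class on a dense set of times. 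More precisely, from $\varphi\in L^4(0,T;H^2_N(\Omega))$ and $\mu\in L^2(0,T;H^1(\Omega))$ we have $\mu(t)=A\varPsi'(\varphi(t))-B\Delta\varphi(t)-\chi\sigma(t)\in H^1(\Omega)$ for a.e. $t$, and $\bm{v}(t)\in\bm{H}^1_{0,\mathrm{div}}(\Omega)$ for a.e. $t$ since $\bm{v}\in L^2(0,T;\bm{H}^1_{0,\mathrm{div}}(\Omega))$. Combining with $\|\varphi(t)\|_{L^\infty}\le 1$ (which holds for all $t$ by weak-$*$ lower semicontinuity and the embedding $H^2\hookrightarrow C(\overline\Omega)$) and $|\overline{\varphi(t)}|<1$ for $t$ in a full-measure set (a consequence of \eqref{mph2} and $|\overline{\varphi_0}|<1$, which in the off-critical case $\alpha>0$ keeps $|\overline{\varphi(t)}|\le|\overline{\varphi_0}|<1$, and in the critical case gives $\overline{\varphi(t)}=\overline{\varphi_0}$), we conclude: for a.e.\ $t_0\in(0,\tau)$, the triple $\big(\bm{v}(t_0),\varphi(t_0),\sigma(t_0)\big)$ satisfies the hypotheses of Theorem \ref{2main}.

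Next I would pick such a good time $t_0\in(0,\tau)$ and invoke Theorem \ref{2main} with initial data $\big(\bm{v}(t_0),\varphi(t_0),\sigma(t_0)\big)$ and source term $S(\cdot+t_0)$ to obtain a strong solution on $[t_0,T]$ enjoying all the regularity listed in Theorem \ref{2main}. Since this strong solution is in particular a weak solution in the sense of Definition \ref{maind} on $[t_0,T]$, and since the weak solution is unique on $[t_0,T]$ (by the continuous-dependence estimate of \cite{H} in two dimensions, or equivalently Corollary \ref{2main1}; the weak uniqueness from \cite[Theorem 2.2]{H} suffices), the original weak solution $(\bm{v},\varphi,\mu,\sigma)$ coincides with that strong solution on $[t_0,T]$. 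Because $t_0<\tau$, the weak solution therefore enjoys on $[\tau,T]$ all the regularity properties stated in Theorem \ref{2main}. Finally, applying Theorem \ref{phase}(1) to the strong solution on the interval $[t_0,T]$ — whose initial phase datum $\varphi(t_0)$ is strictly separated, say $\varphi(t_0)\in[-1+\delta_0,1-\delta_0]$ for some $\delta_0\in(0,1)$ since $\varphi(t_0)\in H^2_N(\Omega)$ with $\varPsi'(\varphi(t_0))\in L^2(\Omega)$ forces $\|\varphi(t_0)\|_{L^\infty}<1$ — yields a constant $\widetilde\delta\in(0,1)$ with $\|\varphi(t)\|_{L^\infty}\le 1-\widetilde\delta$ for all $t\in[t_0,T]\supset[\tau,T]$, which is \eqref{sep1w}.

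The one point requiring care is the claim that $\widetilde\delta$ depends only on $\|\bm{v}_0\|$, $\|\varphi_0\|_{H^1}$, $\|\sigma_0\|$, $\int_\Omega\varPsi(\varphi_0)\,dx$, the coefficients, $\Omega$, $T$ and $\tau$, and not on finer information about the data. This I would handle by a quantitative version of the selection of $t_0$: using the a priori estimates for weak solutions from \cite{H}, one controls $\int_0^\tau\big(\|\bm{v}(s)\|^2_{\bm{H}^1_{0,\mathrm{div}}}+\|\varphi(s)\|^4_{H^2}+\|\mu(s)\|^2_{H^1}+\|\sigma(s)\|^2_{H^1}\big)\,ds$ by a constant depending only on the listed quantities, so by the mean-value principle there is a set of $t_0\in(0,\tau)$ of measure at least $\tau/2$ on which $\|\bm{v}(t_0)\|_{\bm{H}^1_{0,\mathrm{div}}}^2+\|\varphi(t_0)\|^4_{H^2}+\|\mu(t_0)\|_{H^1}^2+\|\sigma(t_0)\|^2_{H^1}\le M$ with $M$ depending only on those quantities and $\tau$; fixing any such $t_0$, the strong-solution estimates of Theorems \ref{2main} and \ref{phase}(1) on $[t_0,T]$ (hence the resulting $\widetilde\delta$) depend only on $M$, the coefficients, $\Omega$ and $T-t_0\le T$. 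This is essentially the standard ``propagation of regularity'' bootstrap; the only mild obstacle is bookkeeping the dependence of constants and confirming the separation of $\varphi(t_0)$ from the measured quantities — but $\varPsi'(\varphi(t_0))\in L^2$ with the uniform-in-$t_0$ bound above, together with the argument of Lemma \ref{se} / the reasoning in Section \ref{phs}, already gives a quantitative $\delta_0=\delta_0(M)$, so no new difficulty arises.
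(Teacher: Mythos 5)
Your proposal is correct and follows essentially the same route as the paper: select a good time $t_0\in(0,\tau)$ at which the weak solution's instantaneous values lie in the admissible class of Theorem \ref{2main} (the paper calls it $\tau_1$ and extracts it from the energy identity and \eqref{mph2}), restart as a strong solution on $[t_0,T]$, identify it with the weak solution via the uniqueness result of \cite[Theorem 2.2]{H}, and invoke Theorem \ref{phase}(1) for the separation; your measure-theoretic selection of $t_0$ makes explicit the dependence of $\widetilde\delta$ that the paper states more tersely. (One small aside: the separation of $\varphi(t_0)$ follows from $\mu(t_0)+\chi\sigma(t_0)+\theta_0\varphi(t_0)\in H^1(\Omega)$ via Lemma \ref{se}, not from $\varPsi'(\varphi(t_0))\in L^2(\Omega)$ alone, but this is not actually needed since Theorem \ref{phase}(1) already yields separation on all of $[t_0,T]$.)
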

Before ending this section, we give two more remarks on the structure of the system \eqref{f3.c}--\eqref{f2.b} and some possible extensions of the above results.

\begin{remark}
As mentioned in the Introduction, our system \eqref{f3.c}--\eqref{f2.b} is a simplified version of the thermodynamically consistent model derived in \cite{LW} for a two-component fluid mixture, which allows for mass transfer between the two components, in the presence of a chemical species that is subject to diffusion, convection and chemotaxis. In the general framework therein, the chemical potential and the flux for the nutrient $\sigma$ are given by
$$
\mu=A\varPsi'(\varphi)-B\Delta\varphi+N_\varphi,\qquad
\mathbf{q}_\sigma = -n(\varphi)\nabla N_\sigma,
$$
 where the functions $N_\varphi$ and $N_\sigma$
are the partial derivatives of the chemical free energy density $N(\varphi, \sigma)$ with respect to $\varphi$ and $\sigma$, respectively. The function $n(\varphi)>0$ denotes the mobility for the nutrient. By taking the following typical form of the chemical free energy density $N$ (see e.g., \cite{GLSS,LW}):
\begin{equation}
	N(\varphi, \sigma)=\frac{\chi_\sigma}{2}|\sigma|^{2}+\chi_\varphi \sigma(1-\varphi), \quad \text{for some}\ \chi_\varphi\in \mathbb{R},\ \chi_\sigma>0,
\label{NN}
\end{equation}
we have
	$$
	N_{\varphi}=-\chi_\varphi \sigma, \quad N_{\sigma}=\chi_\sigma\sigma+\chi_\varphi(1-\varphi),
	$$
which is exactly the case in our system \eqref{f3.c}--\eqref{f2.b} by setting $\chi_\sigma=1$ and $\chi_\varphi=\chi$ (together with $n(\varphi)=1$). On the other hand, in view of \eqref{NN} and using the argument in \cite[Section 1]{GLSS}, we can, for example, make the following choice for the mobility $n(\varphi)$ and the diffusion coefficient $\chi_\sigma$ such that
$$
n(\varphi)=\lambda \chi_\varphi^{-1}, \quad \chi_\sigma=\lambda^{-1}\chi_\varphi, \quad \chi_\varphi=\chi,
$$
for two independent parameters $\lambda, \chi \in \mathbb{R}$ with $\lambda \chi>0$. In this manner, we have
$$
\mu=A\varPsi'(\varphi)-B\Delta\varphi-\chi\sigma,\qquad
\mathbf{q}_\sigma = -\nabla (\sigma+\lambda(1-\varphi)),
$$
which enable us to decouple the two mechanisms of chemotaxis and active transport. For instance, one may
switch off the effect of active transport by sending $\lambda\to 0$, while preserving the effect due to chemotaxis (see \cite[Section 3.3.3]{GLSS} for the case with a quasi-steady nutrient).

In the above setting, problem  \eqref{f3.c}--\eqref{ini0} can be written as follows
	\begin{subequations}
		\begin{alignat}{3}
		&\partial_t  \bm{ v}+\bm{ v} \cdot \nabla  \bm {v}-\mathrm{div} (2  \eta(\varphi) D\bm{v} )+\nabla p=(\mu+ \chi\sigma)\nabla \varphi,\label{1f3.c} \\
		&\mathrm{div}\ \bm{v}=0,\label{1f3.c1}\\
		&\partial_t \varphi+\bm{v} \cdot \nabla \varphi=\Delta \mu-\alpha(\varphi-c_0),\label{1f1.a} \\
		&\mu=A\varPsi'(\varphi)-B\Delta \varphi-\chi \sigma,\label{1f4.d} \\
		&\partial_t \sigma+\bm{v} \cdot \nabla \sigma= \Delta (\sigma+\lambda(1-\varphi))-\mathcal{C} h(\varphi) \sigma +S, \label{1f2.b}
		\end{alignat}
	\end{subequations}
subject to the boundary and initial conditions
\begin{alignat}{3}
&\bm{v}=\mathbf{0},\quad {\partial}_{\bm{n}}\varphi={\partial}_{\bm{n}}\mu={\partial}_{\bm{n}}\sigma=0,\qquad\qquad &\textrm{on}& \   \partial\Omega\times(0,T),
\label{1boundary}\\
&\bm{v}|_{t=0}=\bm{v}_{0}(x),\ \ \varphi|_{t=0}=\varphi_{0}(x), \ \ \sigma|_{t=0}=\sigma_{0}(x), \qquad &\textrm{in}&\ \Omega.
\label{1ini0}
\end{alignat}
Multiplying \eqref{1f3.c} with $\bm{v}$, \eqref{1f1.a} with $\mu$, \eqref{1f4.d} with $\partial_t \varphi$,\ \eqref{1f2.b} with $\lambda^{-1}\chi(\sigma+\lambda(1-\varphi))$, integrating over $\Omega$ and adding the resultants together, we obtain the following (formal) basic energy law:
\begin{align}
& \frac{d}{dt} \int_{\Omega} \Big[ \frac{1}{2}|\bm{v}|^2+A\varPsi(\varphi)+\frac{B}{2}|\nabla \varphi|^2+\frac{\chi}{2\lambda}|\sigma|^2+\chi\sigma(1-\varphi) \Big] dx \notag \\
& \qquad +\int_{\Omega} \Big[ 2\eta(\varphi)|D\bm{v}|^2 + |\nabla \mu|^2+ \frac{\chi}{\lambda}|\nabla(\sigma+\lambda(1-\varphi))|^2\Big] dx\nonumber\\
&\quad  =\int_\Omega \left[-\alpha(\varphi-c_0)\mu+\frac{\chi}{\lambda}(-\mathcal{C} h(\varphi) \sigma +S)(\sigma +\lambda(1-\varphi))\right] dx.
\label{BELb}
\end{align}
Thanks to \eqref{BELb}, one can check that the results obtained in \cite{H} and the current paper can be recovered for problem \eqref{1f3.c}--\eqref{1ini0} with only minor modifications.
\end{remark}
\begin{remark}
We note that there is another type of singular potential called the double obstacle potential that is commonly used in the literature. The corresponding free energy density takes the following form
$$
\varPsi_{\mathrm{obs}}(r)
=\frac{\theta_0}{2}(1-r^{2})+\mathbb{I}_{[-1,1]}(r)=
\begin{cases}
\displaystyle{\frac{\theta_{0}}{2}(1-r^2),}\quad\ \text{if}\ r\in [-1,1],\\
+\infty,\qquad \qquad \text{else},
\end{cases}
$$
and it can be obtained in the ``deep-quench limit" of the logarithmic potential \eqref{pot} by letting $\theta\to 0^+$ (see \cite{BE91}). Physically, this limit describes the dynamics of phase separation of binary mixtures, where the absolute temperature $\theta$ is far from the critical temperature $\theta_0$ below which phase separation occurs (see \cite{Abels11}).
In particular, in \cite{Abels11}, the author studied the deep-quench limit as $\theta \rightarrow 0^+$ (called ``double obstacle limit" therein) for the well-known ``Model H" describing the dynamics of a two-phase flow of viscous incompressible Newtonian fluids with the same density but different viscosity. For both two and three dimensions, he proved that global weak solutions of the approximate problem with the logarithmic potential \eqref{pot} converge as $\theta\to 0^+$ (up to a suitable subsequence) to global weak solutions of the corresponding Navier--Stokes--Cahn--Hilliard system, where the equation for the chemical potential is replaced by a differential inclusion related to the subgradient of $\varPsi_{\mathrm{obs}}$, that is, $
\mu+\Delta \varphi+\theta_0\varphi \in \partial I_{[-1,1]}(\varphi)$ (see \cite[Theorem 4.3]{Abels11}). We expect that a similar convergence result for global weak solutions to our problem \eqref{f3.c}--\eqref{ini0} with nutrient interaction can be obtained by extending the arguments in \cite{Abels11}. On the other hand, one can directly prove the existence of global weak solutions to the limit system with the double obstacle potential by applying suitable approximate schemes, for instance, via either the classical Moreau--Yosida regularization, or certain regular approximation of polynomial type (see e.g., \cite{EL}).
Finally, in analogy to \cite[Proposition 4.4, Theorem 4.5]{Abels11}, some further results on the uniqueness and regularity of weak solutions to the limit system can also be achieved.

\end{remark}

%%%%%%%%%%%%%%%%%%%%%%%%%%%%%%%%%%%%%%%%%%%%%%%%%%%%%%%%%%%%%%%%%%%%%%%%%%%%%%%%%%%%%%%%%%%%%%%%%%%%%%%%%%3333333333333333333333333333333333
 %%%%%%%%%%%%%%%%%%%%%%%%%%%%%%%%%%%%
 \section{Strong Solutions: Global Well-posedness}\label{ws}
 \setcounter{equation}{0}
 \noindent In this section, we prove Theorem \ref{2main} on the existence and uniqueness of global strong solutions to problem \eqref{f3.c}--\eqref{ini0}. The proof of existence relies on a semi-Galerkin scheme introduced in \cite{H} and suitable \textit{a priori} higher-order energy estimates. By the approximate scheme we perform a finite dimensional approximation of the Navier--Stokes equation with an external force term (given by $\varphi, \mu, \sigma$), and then find an approximate solution $(\bm{v}^m, \varphi^m,\mu^m, \sigma^m)$ through a fixed point argument. Finally, we derive uniform estimates on the approximate solutions and pass to the limit as $m\to +\infty$ in the approximate formulation.

 \subsection{Estimate for an elliptic problem}
As a preliminary step, we report some useful estimates for the following Neumann problem of an elliptic equation with a singular nonlinearity:
\be
\begin{cases}
-B\Delta u+A\varPsi_{0}^{\prime}(u)=f,\quad \text { in } \Omega, \\
\partial_{\bm{n}} u=0, \qquad \qquad\qquad\ \ \,
\text { on } \partial \Omega,
\end{cases}
\label{ph1}
\ee
where the function $\varPsi_0$ satisfies the assumptions (H2)--(H3), and $A$, $B$ are given positive constants.

\bl\label{a.2} Suppose that  $\Omega\subset\mathbb{R}^{2}$ is a bounded domain with smooth boundary, the assumptions (H2)--(H3) are satisfied and $A,B>0$. For any $f\in L^2(\Omega)$, problem \eqref{ph1} admits a unique solution $u\in H_N^2(\Omega)$ that satisfies the equation a.e. in $\Omega$ and $\varPsi_{0}^{\prime}(u)\in L^2(\Omega)$. Besides,
%(1) If $f \in L^{q}(\Omega)$ with $q \in [2,+\infty)$, then $\|\varPsi_{0}^{\prime}(u)\|_{L^{q}} \leq\|f\|_{L^{q}}$.
if $f \in H^1(\Omega)$, then $\|\Delta u\| \leq C\|\nabla u\|^{\frac{1}{2}}\|\nabla f\|^{\frac{1}{2}}$. Moreover, for any $q\in [2,+\infty)$, there exists a positive constant $C=C(q,A,B,\Omega)$ such that
\begin{align}
\|u\|_{W^{2, q}}+\|\varPsi_{0}^{\prime}(u)\|_{L^{q}} &\leq C\left(1+\|f\|_{H^1}\right),\label{ph2}\\
\|\varPsi_{0}^{\prime \prime}(u)\|_{L^{q}}
&\leq C\Big(1+{e}^{C\|f\|_{H^1}^{2}}\Big).
\label{ph3}
\end{align}
\el
For the proof of Lemma \ref{a.2} we may refer to, for instance, \cite{A2009,CG,GGM2017,GGW,GMT} (where related results in three dimensions were also obtained).

As a consequence of the above lemma, we can obtain the following separation property for solutions to the elliptic problem \eqref{ph1}.
\bl\label{se}
Let the assumptions of Lemma \ref{a.2} be satisfied. For any $f \in H^1(\Omega)$, denote the corresponding solution to problem \eqref{ph1} by $u$. Then we have
$\varPsi_{0}^{\prime}(u)\in W^{1, q}(\Omega)$ for any  $q\in [2,+\infty)$. Moreover, there exists a constant $\delta\in (0,1)$ such that
\be
\|u\|_{L^{\infty}} \leq 1-\delta.\label{sep2}
\ee
\el
 \begin{proof}
 It follows from Lemma \ref{a.2} that $u\in W^{2,q}(\Omega)$, $\varPsi'_0(u)\in L^q(\Omega)$, for $q\in [2,+\infty)$. By the Sobolev embedding theorem, we also have $u\in C(\overline{\Omega})$. Next, for any integer $k>\epsilon_0^{-1}$, where $\epsilon_0$ is the constant given in (H2),  we define the following globally Lipschitz continuous function $h_{k}: \mathbb{R} \rightarrow \mathbb{R}$  (see e.g., \cite[Corollary 4.1]{GGM2017}):
 $$
 h_{k}(r)=\left\{\begin{array}{ll}
 1-k^{-1}, &\ \  r>1-k^{-1}, \\
 r, & \ \ r\in [-1+k^{-1},\ 1-k^{-1}], \\
 -1+k^{-1}, &\ \ r<-1+k^{-1}.
 \end{array}\right.
 $$
 Set $u_{k}=h_{k}\, \circ\, u$. Then we have $\partial_{x_i} u_k=I_{[-1+k^{-1},\,1-k^{-1}]}(u)\partial_{x_i} u$ and $|\partial_{x_i}u_k|\leq \partial_{x_i}u$, $i=1,2$. Moreover, $u_k\in W^{1,q}(\Omega)$. On the other hand, as $k\to +\infty$, we see that $\varPsi_0'(u_k)\to \varPsi_0'(u)$ and $\varPsi_0''(u_k)\to \varPsi_0''(u)$ a.e. in $\Omega$,  and it follows from the assumption (H2) that
 $ |\varPsi_{0}^{\prime}(u_k)|\le  |\varPsi_{0}^{\prime}(u)|$ and $|\varPsi_{0}^{\prime\prime}(u_k)|\le |\varPsi_{0}^{\prime\prime}(u)|$ a.e. in $\Omega$.
 For any $\omega\in C_0^{\infty}(\Omega )$, we deduce from Lebesgue's dominated convergence theorem that
 \begin{align}
 	\int_{\Omega} \varPsi_{0}^{\prime}(u) \partial_{x_i}\omega \, dx
 &=\int_{\Omega} \lim_{k\to+\infty}\varPsi_{0}^{\prime}(u_k) \partial_{x_i}\omega \, dx  =\lim_{k\to+\infty}\int_{\Omega} \varPsi_{0}^{\prime}(u_k) \partial_{x_i}\omega \, dx \nonumber \\
 	&=-\lim_{k\to+\infty}\int_{\Omega}  \varPsi_{0}^{\prime\prime}(u_k) \partial_{x_i}u_k\omega \, dx=-\int_{\Omega}  \lim_{k\to+\infty}\varPsi_{0}^{\prime\prime}(u_k) \partial_{x_i}u_k\omega \, dx\nonumber\\
 &=-\int_{\Omega}  \varPsi_{0}^{\prime\prime}(u) \partial_{x_i}u\omega \, dx,\quad i=1,2.
 \label{par0}
 \end{align}
 As a consequence, we see that
 $ \partial_{x_i}\varPsi_{0}^{\prime}(u) =\varPsi_{0}^{\prime\prime}(u) \partial_{x_i}u$ in the sense of distribution.
 By the estimates \eqref{ph2} and \eqref{ph3}, we have
 \be
 \|\varPsi_{0}^{\prime\prime}(u) \partial_{x_i}u\|_{L^q} \le\|\varPsi_{0}^{\prime\prime}(u)\|_{L^{2q}}\| \nabla u\|_{L^{2q}}\le C,\quad \forall\,q \in [2,+\infty),
 \ee
 where $C$ may depend on $\Omega$, $A$, $B$, $q$ and $\|f\|_{H^1}$. This together with \eqref{ph2} implies that
 $\varPsi_{0}^{\prime}(u) \in W^{1, q}(\Omega)$, for any $q\in [2,+\infty)$. Thus, from  the Sobolev embedding theorem in two dimensions, we can conclude
 $ \varPsi_{0}^{\prime}(u) \in L^{\infty}(\Omega)$,
 which combined with the assumption (H2) and the fact $u\in C(\overline{\Omega})$ yields the phase separation property \eqref{sep2}.
 \end{proof}

 \begin{remark}\label{regini}
  For the initial data considered in Theorem \ref{2main}: $\varphi_{0} \in H^{2}_N(\Omega)$, $\sigma_{0}\in H^1(\Omega)$ with $\left\|\varphi_{0}\right\|_{L^{\infty}} \leq 1,\ \left|\overline{\varphi}_{0}\right|<1$ and
	$\mu_{0}=A\varPsi^{\prime}\left(\varphi_{0}\right) -B\Delta \varphi_{0} -\chi\sigma_0\in H^1(\Omega)$, we infer from Lemma \ref{se} that there exists $\delta_0\in (0,1)$ such that
$\|\varphi_0\|_{L^{\infty}} \leq 1-\delta_0$, i.e., the initial phase function is indeed strictly separated from the pure states $\pm 1$. Then by the elliptic estimate for Neumann problem, we have $\varphi_0\in H^3(\Omega)$ (and thus the conclusion $\varphi\in C_w([0,T];H^3(\Omega))$ in Theorem \ref{2main} makes sense). Conversely, if $\varphi_0\in H^3(\Omega)$ and is strictly separate from $\pm 1$, it holds $\mu_0\in H^1(\Omega)$. The above observation is only valid in the two dimensional case, since the proof of the estimate  \eqref{ph3} in Lemma \ref{a.2} essentially relies on the Trudinger--Moser inequality in two dimensions.
 \end{remark}

\subsection{Semi-Galerkin scheme}
  We recall the Semi-Galerkin scheme introduced in \cite{H} (now stated in a strong form). Let the family $\{\bm{y}_{k}(x)\}_{k=1}^{\infty}$ be a basis of the Hilbert space $\bm{H}^1_{0,\mathrm{div}}(\Omega)$, which is given by eigenfunctions of the Stokes problem
\be
(\nabla \bm{y}_{k},\nabla \bm{w})=\lambda_{k}(\bm{y}_{k},\bm{w}),\quad  \forall\, \bm{w} \in {\bm{H}^1_{0,\mathrm{div}}(\Omega)},\quad \textrm{with}\ \|\bm{y}_{k}\|=1,\nonumber
\ee
where $\lambda_{k}$ is the eigenvalue corresponding to $\bm{y}_{k}$.  It is well-known that $0<\lambda_{1}< \lambda_{2}<... $ is an unbounded monotonically increasing sequence, $\{\bm{y}_{k}\}_{k=1}^{\infty}$ forms a complete orthonormal basis in $\bm{L}^2_{0,\mathrm{div}}(\Omega)$ and it is also orthogonal in $\bm{H}^1_{0,\mathrm{div}}(\Omega)$. If $\Omega$ is of class $C^{\infty}$, we can deduce from the elliptic regularity theory that $\bm{y}_{k} \in C^{\infty} $ for $k\in \mathbb{N}$.
For every $m\in \mathbb{N}$, we denote the finite-dimensional subspace of $\bm{H}^1_{0,\mathrm{div}}(\Omega)$ by
$\bm{H}_{m}:=\textrm{span} \{\bm{y}_{1}(x) ,...,\bm{y}_{m}(x)\}$.
Moreover, we use $\bm{P}_{\bm{H}_m}$ for the corresponding orthogonal projections from $\bm{L}^2_{0,\mathrm{div}}(\Omega)$ onto $\bm{H}_m$.

Let $T>0$ be given. For every $m\in \mathbb{N}$, we consider the following approximate problem: looking for functions
$\bm{v}^{m}(x,t):=\sum_{i=1}^{m}a_{i}^{m}(t)\bm{y}_{i}(x)$
and $(\varphi^{m},\, \mu^{m},\, \sigma^{m})$ satisfying
\begin{equation}
\begin{cases}
(\partial_t  \bm{ v}^{m},\bm{w})+( \bm{ v}^{m} \cdot \nabla  \bm {v}^{m},\bm{ w})+(  2\eta(\varphi^{m}) D\bm{v}^{m},D\bm{w})  =((\mu^{m}+\chi \sigma^{m})\nabla \varphi^{m},\bm {w}), \\
\qquad \qquad \text{for all } \bm{w} \in \bm{H}_{m}\ \text{and almost all } t\in (0,T),\\
\partial_t \varphi^m+ \bm{v}^m \cdot \nabla \varphi^m =\Delta  \mu^{m}-\alpha(\varphi^m-c_0), \qquad \qquad \qquad \qquad\ \ \, \text{a.e. in }\ \ \Omega\times(0,T),\\
\mu^{m}=A\varPsi'(\varphi^{m})-B\Delta \varphi^{m}-\chi \sigma^{m},  \qquad \qquad \qquad \qquad \qquad \, \qquad\text{a.e. in }\ \ \Omega\times(0,T),\\
\partial_t \sigma^m+\bm{v}^m \cdot \nabla \sigma^m =\Delta (\sigma^m+(1-\chi)\varphi^m)-\mathcal{C} h(\varphi^m) \sigma^m + S,  \quad \text{a.e. in }\ \ \Omega\times(0,T),
\end{cases}
\label{sapp1}
\end{equation}
as well as the boundary and initial conditions
\begin{equation}
\begin{cases}
\bm{v}^m=\mathbf{0},\quad {\partial}_{\bm{n}}\varphi^m ={\partial}_{\bm{n}}\mu^m={\partial}_{\bm{n}}\sigma^m=0, \qquad\qquad\quad \ \,  \textrm{on} \   \partial\Omega\times(0,T),
\\
\bm{v}^{m}|_{t=0}=\bm{P}_{\bm{H}_{m}} \bm{v}_{0},\quad \varphi^{m}|_{t=0}=\varphi_{0,},\quad  \sigma^{m}|_{t=0}=\sigma_{0},\quad \text{in}\ \Omega.
\end{cases}
\label{sapp2}
\end{equation}

Then we have the following result on the solvability of the approximate problem \eqref{sapp1}--\eqref{sapp2}:

\bp\label{p1} Let the assumptions in Theorem \ref{2main} be satisfied. Suppose that the initial data satisfy $ \bm{v}_{0} \in \bm{H}^1_{\mathrm{0,div}}(\Omega)$, $\varphi_{0} \in H^{2}_N(\Omega)$, $\sigma_{0}\in H^1(\Omega)$ with $\left\|\varphi_{0}\right\|_{L^{\infty}} \leq 1,\ \left|\overline{\varphi}_{0}\right|<1$ and
	$\mu_{0}=A\varPsi^{\prime}\left(\varphi_{0}\right) -B\Delta \varphi_{0} -\chi\sigma_0\in H^1(\Omega)$.
For every positive integer $m$, there exists a time $T_{m}\in [0,T]$ depending on $\bm{v}_{0},\ \varphi_{0},\ \sigma_{0}$, $\Omega$, $m$ and coefficients of the system such that the approximate problem \eqref{sapp1}--\eqref{sapp2} admits a strong solution $(\bm{v}^{m},\varphi^{m},\mu^{m},\sigma^{m})$ on $[0,T_{m}]\subset[0,T]$ with the following regularity
\begin{align}
&\bm{v}^{m} \in C^1([0,T_{m}];\bm{H}_m),\notag\\
& \varphi^{m} \in L^\infty(0,T_{m};W^{2,q}(\Omega)) \cap H^{1}(0,T_{m};H^1(\Omega)),\quad \forall\, q\in [2,+\infty),\notag \\
&\mu^{m} \in   L^\infty(0,T_{m};H^1(\Omega))\cap L^2(0,T_m;H^3(\Omega)),\notag \\
&\sigma^{m}   \in L^\infty(0,T_{m};H^1(\Omega))\cap L^{2}(0,T_{m};H^2(\Omega)) \cap H^1(0,T_{m};L^2(\Omega)).\notag \\
&\varphi^m \in L^{\infty}(\Omega\times(0,T_m))\ \textrm{and}\ |\varphi^m|<1\ \ \textrm{a.e.\ in}\ \Omega\times(0,T),\notag\\
& \varPsi_0^{\prime}(\varphi^m),\  \varPsi_0^{\prime\prime}(\varphi^m) \in L^{\infty}\left(0, T ;L^{q}(\Omega)\right),\quad \forall\, q\in[2,+\infty),\notag
\end{align}
that satisfies \eqref{sapp1} as well as the boundary and initial conditions \eqref{sapp2}. Moreover, there exists a constant $\delta_m\in (0,1)$ such that
\be
\|\varphi^m(t)\|_{L^{\infty}} \leq 1-\delta_m,\quad \forall\, t \in[0,T_m].
\label{sepm}
\ee
\ep
We only sketch the strategy for the proof of Proposition \ref{p1}, which is essentially parallel to that in \cite{H} for the case of weak solutions with suitable modifications (keeping in mind that we are now working with more regular initial data). For any given positive integer $m$, first, we take some $\widehat{\bm{u}}^m\in H^1(0,T;\bm{H}_m)$ and solve the equations for $(\varphi^m, \sigma^m)$ with convection terms related to the given velocity  $\widehat{\bm{u}}^m$ by extending the arguments in \cite{A2009} for the convective Cahn--Hilliard equation with singular potential (cf. also \cite{CG,GMT,GGW}). Then inserting the resulting strong solution $(\widehat{\varphi}^m, \widehat{\mu}^m, \widehat{\sigma}^m)$  back into the Galerkin approximation for $\bm{v}^m$, we obtain a solution $\widehat{\bm{v}}^m\in C^1([0,T_m]; \bm{H}_m)$ for some $T_m\in (0,T]$. After that, one can adapt the fixed point argument like in \cite[Section 3]{H} to show that the mapping $\widehat{\bm{u}}^m\mapsto \widehat{\bm{v}}^m$ has a fixed point $\bm{v}^m$ in a suitable closed convex set in $C([0,T_m];\bm{H}_m)$, which finally gives the existence of an approximate solution $(\bm{v}^{m},\varphi^{m},\mu^{m},\sigma^{m})$ on $[0,T_m]$ as stated in Proposition \ref{p1}. Thanks to Lemma \ref{se}, we further infer that $\varPsi_0^{\prime}(\varphi^m) \in L^{\infty}\left(0, T_m ;W^{1,q}(\Omega)\right)$ for any $q\in [2,+\infty)$ and the separation property \eqref{sepm} holds. Since the procedure is standard and lengthy, we omit the details here.

\subsection{\textit{A priori} estimates}\label{ap}
We proceed to perform \textit{a priori} estimates for the approximate solutions. To simplify the presentation, we drop the superscript $m$ and work with smooth solutions $(\bm{v},\varphi,\mu,\sigma)$ to problem \eqref{f3.c}--\eqref{ini0}.

\textbf{Lower order estimates}. Recalling  \cite[Section 3]{H}, from \eqref{mph1}, \eqref{BEL} we can derive the following lower order estimates:
\begin{align}
&\left \|  \bm{v}\right \|_{L^{\infty}(0,T;\bm{L}^2(\Omega))\cap L^{2}(0,T;\bm{H}^1(\Omega))\cap H^{1}(0,T;\bm{H}^1(\Omega)')} \notag\\
&\quad +\left \|  \varphi\right \|_{L^{\infty}(0,T;H^1(\Omega))\cap L^4(0,T;H^2(\Omega))\cap L^2(0,T;W^{2,q}(\Omega))\cap H^{1}(0,T;H^1(\Omega)')} \notag \\
& \quad +\left \|  \mu\right \|_{L^{2}(0,T;H^1(\Omega))} +\left \|  \sigma\right \|_{L^{\infty}(0,T;L^2(\Omega))\cap L^{2}(0,T;H^1(\Omega) )\cap H^{1}(0,T;H^1(\Omega)')} \notag\\
&\quad + \|\varPsi'(\varphi)\|_{L^2(0,T;L^q(\Omega))} \le  C_1,
\label{estimate1}
\end{align}
with $q\in [2,+\infty)$, where the constant $C_1>0$ depend on $\|\bm{v}_0\|$, $\|\varphi_0\|_{H^1}$, $\|\sigma_0\|$, $\int_\Omega \varPsi(\varphi_0)dx$, coefficients of the system, $\Omega$ and $T$. Besides, we have
\be
\|\varphi(t)\|_{L^{\infty}} \leq 1\quad \text{and}\quad |\overline{\varphi}(t)|<1,\quad \forall\, t\in [0,T].\label{L-ave}
\ee

%%%%%%%%%%%%%%%%%%%%%%%%%%%%%%%%%%%%%

\textbf{Estimate for $\|\nabla \bm{v}\|$}.
The estimates can be obtained as in  \cite[Section 4]{GMT}, with modifications for those terms involving $\sigma$. Below we only sketch the main steps. Multiplying equation \eqref{f3.c} by $\bm{S} \bm{v}$ and integrating over $\Omega$, we obtain
\begin{equation}
\frac{1}{2} \frac{d}{d t}\left\|\nabla\bm{v}\right\|^{2} +\left(\bm{v}\cdot\nabla\bm{v}, \bm{S}\bm{v}\right)-2\left(\operatorname{div}\left(\eta\left(\varphi\right) D\bm{v}\right), \bm{S}\bm{v}\right)=\left((\mu+\chi\sigma) \nabla \varphi, \bm{S}\bm{v}\right). \label{v3}
\end{equation}
Following the computations in the proof of \cite[Theorem 4.1]{GMT}, we obtain
\be
\begin{aligned}
-2\left(\operatorname{div}\left(\eta\left(\varphi\right) D \bm{v}\right), \bm{S}\bm{v}\right)
	&=-\left(\eta\left(\varphi\right) \Delta \bm{v}, \bm{S}\bm{v}\right)-2\left(\eta^{\prime}\left(\varphi\right) D \bm{v} \nabla \varphi, \bm{S}\bm{v}\right)\\
	&\geq \frac{5\eta_{*}}{6}\left\|\bm{S}\bm{v}\right\|^{2} -C\big(1+\left\|\varphi\right\|_{H^{2}}^{4}\big)\left\|\nabla \bm{v}\right\|^{2},
\end{aligned}\notag
\ee
\begin{equation}
\begin{aligned}
|\left((\bm{v}\cdot\nabla)\bm{v}, \bm{S}\bm{v}\right)|
& \leq\left\|\bm{v}\right\| _{\bm{L}^{4}}\left\|\nabla \bm{v}\right\| _{\bm{L}^{4}}\left\|\bm{S}\bm{v}\right\|
\leq \frac{\eta_{*}}{6}\left\|\bm{S}\bm{v}\right\|^{2} +C\left\|\nabla \bm{v}\right\|^{4},
\end{aligned}
\notag
\end{equation}
where the positive constant $C$ may depend on $\eta_*$, $\Omega$ and coefficients of the system. Next, in light of Young's inequality and the Sobolev embedding theorem, we have
\begin{equation}
\begin{aligned}
\left((\mu+\chi\sigma) \nabla \varphi, \bm{S} \bm{v}\right)
& \leq\left\|\mu+\chi\sigma\right\|_{L^{6}}\left\|\nabla \varphi\right\|_{\bm{L}^{3}} \left\|\bm{S} \bm{v}\right\| \\
& \leq \frac{\eta_{*}}{6}\left\|\bm{S} \bm{v}\right\|^{2} +C\left\|\varphi\right\|_{H^{2}}^{2} \big(1+\left\|\mu\right\|_{H^1}^{2}+\|\sigma\|_{H^1}^2\big).
\end{aligned}\notag
\end{equation}
Recalling the fact (see \cite{H})
\begin{equation*}
\begin{aligned}
&\| \varPsi'(\varphi)\|_{L^1}
\le C (1+\|  \nabla \mu\|),
\end{aligned}
\end{equation*}
where the constant $C$ depends on the lower order estimate $C_1$, we see that
$|\overline{\mu}|\leq C(1+\|\nabla \mu\|)$,
which together with the Poincar\'{e}--Wirtinger inequality implies
\begin{equation}
\left \|  \mu  \right \|_{H^1 }\le  C (1+\|  \nabla \mu\|).
\label{2amuL2H1}
\end{equation}
Combining the above estimates, we deduce from \eqref{v3} that
\begin{align}
\frac{1}{2} \frac{d}{d t}\left\|\nabla \bm{v}\right\|^{2}+\frac{\eta_{*}}{2}\left\|\bm{S} \bm{v}\right\|^{2}
& \leq C\left\|\nabla \bm{v}\right\|^{4}+ C(1+\|\varphi\|_{H^{2}}^{4})\left\|\nabla \bm{v}\right\|^{2}\notag\\
&\quad+C\|\varphi\|_{H^{2}}^{2}(1+\|\nabla \mu\|^{2}+\|\sigma\|_{H^1}^2).
\label{sv}
\end{align}
\noindent Multiplying equation \eqref{f3.c} by $\partial_{t} \bm{v}$ and integrating over $\Omega$, we have
\begin{equation}
\left\|\partial_{t}\bm{v}\right\|^{2} +\left(\bm{v}\cdot\nabla\bm{v}, \partial_{t}\bm{v}\right) -2\left(\operatorname{div}\left(\eta\left(\varphi\right) D\bm{v}\right), \partial_{t}\bm{v}\right)=\left((\mu+\chi\sigma) \nabla \varphi, \partial_{t}\bm{v}\right).
\label{v1}
\end{equation}
By Young's inequality and the Sobolev embedding theorem, we have
\begin{align}
\left((\mu+\chi\sigma) \nabla \varphi, \partial_{t} \bm{v}\right)
&  \leq \left\|\mu+\chi\sigma\right\|_{L^{6}}\left\|\nabla \varphi\right\|_{\bm{L}^{3}}\left\|\partial_{t} \bm{v}\right\| \notag\\
& \leq \frac{1}{6}\left\|\partial_{t} \bm{v}\right\|^{2}+C\left\|\varphi\right\|_{H^{2}}^{2} \big(1+\left\|\nabla \mu\right\|^{2}+\|\sigma\|_{H^1}^2\big). \notag
\end{align}
The other two terms on the left hand side of \eqref{v1} can be estimated exactly as in the proof of \cite[Theorem 4.1]{GMT} such that
\begin{equation}
\begin{aligned}
|\left(\bm{v}\cdot\nabla\bm{v}, \partial_{t}\bm{v}\right) | &  \leq \frac{1}{6}\left\|\partial_{t}\bm{v}\right\|^{2} +C\big(\left\|\bm{S}\bm{v}\right\|^{2} +\left\|\nabla\bm{v}\right\|^{4}\big),\\
2\left(\operatorname{div}\left(\eta\left(\varphi\right) D \bm{v}\right), \partial_{t} \bm{v}\right)
&\leq \frac{1}{6}\left\|\partial_{t} \bm{v}\right\|^{2}+C\big(\left\|\bm{S} \bm{v}\right\|^{2}+\left\|\varphi\right\|_{H^{2}}^{2}\left\|\nabla \bm{v}\right\|^{2}\big).
\end{aligned}\notag
\end{equation}
As a consequence,
\be
\begin{aligned}
	\left\|\partial_{t} \bm{v}\right\|^{2} &\leq C_2 \left\|\bm{S} \bm{v}\right\|^{2}+ C\left\|\nabla \bm{v}\right\|^{4} + C\left\|\varphi\right\|_{H^{2}}^{2} \big(1+\left\|\nabla \bm{v}\right\|^{2}+\left\|\nabla \mu\right\|^{2}+\|\sigma\|_{H^1}^2\big).\label{vt}
\end{aligned}
\ee
Multiplying \eqref{vt} by $\varpi=\frac{\eta_{*}}{4 C_{2}}>0$ and adding the resultant with \eqref{sv}, we find
\begin{equation}
\begin{aligned}
&\frac{1}{2} \frac{d}{d t}\|\nabla \bm{v}\|^{2} +\frac{\eta_{*}}{4}\|\bm{S} \bm{v}\|^{2} +\varpi\|\partial_{t} \bm{v}\|^{2}\\
& \quad \leq C\|\nabla \bm{v}\|^{4}+ C\big(1+\|\varphi\|_{H^{2}}^{4})(1+\|\nabla \bm{v}\|^{2}+\|\nabla \mu\|^{2}+\|\sigma\|_{H^1}^2\big).
\label{2v1}
\end{aligned}
\end{equation}
%%%%%%%%%%%%%%%%%%%%%%%%%

\textbf{Estimate for $\|\nabla \mu\|$}.
Multiplying equation \eqref{f1.a} by $\partial_{t} \mu$ and integrating over $\Omega$, we obtain
\begin{equation}
\frac{1}{2} \frac{d}{d t}\left\|\nabla \mu\right\|^{2}+\left( \partial_{t} \varphi,\partial_{t} \mu\right)+\left( \bm{v} \cdot \nabla \varphi,\partial_{t} \mu\right)+(\alpha(\varphi-c_0),\partial_{t} \mu)=0,\label{mu}
\end{equation}
As in \cite{MT}, we write
\be
\begin{aligned}\left( \bm{v} \cdot \nabla \varphi,\partial_{t} \mu\right) = \frac{d}{d t} \left(\bm{v} \cdot \nabla \varphi, \mu\right)  -\left(\partial_{t} \bm{v}\cdot \nabla \varphi, \mu\right)-\left(\bm{v} \cdot \nabla \partial_{t} \varphi, \mu\right). \end{aligned}\label{d1}
\ee
Then by Young's inequality, the Sobolev embedding theorem and \eqref{2amuL2H1}, we get
\begin{align}
(\partial_{t}\bm{v} \cdot \nabla \varphi, \mu) &\le\left\|\nabla \varphi\right\|_{\bm{L}^{3}}\left\|\partial_{t} \bm{v}\right\|\left\|\mu\right\|_{L^{6}} \leq \frac{\varpi}{4}\left\|\partial_{t} \bm{v}\right\|^{2} +C \left\|\varphi\right\|_{H^{2}}^{2} \big(1+\left\|\nabla \mu\right\|^{2} +\|\sigma\|_{H^1}^2\big),%\label{ptv}
\end{align}
and
\begin{equation}
\begin{aligned}\left( \bm{v}\cdot \nabla \partial_{t} \varphi,\mu\right) & \leq\left\|\bm{v}\right\|_{ \bm{L}^3}\left\|\nabla \partial_{t} \varphi\right\|\left\|\mu\right\|_{L^{6}} \leq \frac{B}{4}\left\|\nabla \partial_{t} \varphi\right\|^{2}+C \left\|\nabla \bm{v}\right\|^2\big(1+\left\|\nabla \mu\right\|^{2}+\|\sigma\|_{H^1}^2\big).
\end{aligned}\notag
\end{equation}
Next, since
\be
\int_\Omega [\partial_{t} \varphi(t)+\alpha(\varphi(t)-c_0)]dx=0,\quad \forall\, t \in[0, T],\label{mut}
\ee
then using the Poincar\'{e}--Wirtinger inequality, we deduce that
\begin{align}
|A(\theta-\theta_0)|\left\|\partial_{t} \varphi\right\|^{2}
&\leq C\left\|\partial_{t} \varphi+\alpha(\varphi-c_0)\right\|^{2} +C\left\|\alpha(\varphi-c_0)\right\|^{2} \notag\\
&\leq C \left\|\nabla( \partial_{t} \varphi+\alpha(\varphi-c_0)\right\|\left\|\nabla \mathcal{N} (\partial_{t} \varphi+ \alpha(\varphi-c_0))\right\| +C\left\|\alpha(\varphi-c_0)\right\|^{2}\notag\\
&\leq\frac{B}{16} \left\|\nabla( \partial_{t} \varphi+\alpha(\varphi-c_0)\right\|^{2} +\frac{16C^2}{B}\left\|\nabla \mathcal{N} (\partial_{t} \varphi+ \alpha(\varphi-c_0))\right\|^2\notag\\
&\quad+C\left\|\alpha(\varphi-c_0)\right\|^{2}\notag\\
&\leq\frac{B}{8} \left\|\nabla \partial_{t} \varphi\right\|^{2} +C \big(1+\left\|\partial_{t} \varphi \right\|_{(H^1)'}^{2} +\left\| \varphi\right\|_{H^1}^{2}\big),\notag
\end{align}
and
\begin{align}
|\chi\left(  \partial_{t} \varphi,\partial_{t} \sigma\right)|
&=|\chi||\left( \partial_{t} \varphi+\alpha(\varphi-c_0) ,\partial_{t} \sigma\right)-\left( \alpha(\varphi-c_0),\partial_{t} \sigma\right)|\notag\\
&\leq C\left\| \partial_{t} \varphi+\alpha(\varphi-c_0)\right\|_{H^1}\left\| \partial_{t} \sigma\right\|_{(H^1)'}+C\left\|\alpha(\varphi-c_0)\right\|_{H^1}\left\| \partial_{t} \sigma\right\|_{(H^1)'}\notag\\
&\leq \frac{B}{8} \left\|\nabla \partial_{t} \varphi\right\|^{2} +C \big(1+\left\| \partial_{t} \sigma\right\|_{(H^1)'}^2 +\left\| \varphi\right\|_{H^1}^{2}\big).
\notag
\end{align}
Finally, from the assumption  (H2) and the above two estimate, we obtain
\begin{align}\left( \partial_{t} \varphi,\partial_{t} \mu\right)
&=B\left\|\nabla \partial_{t} \varphi\right\|^{2} +A\left(\partial_{t} \varphi,\varPsi''\left(\varphi\right) \partial_{t} \varphi\right) -\chi\left(  \partial_{t} \varphi,\partial_{t} \sigma\right) \notag\\
& \geq B\left\|\nabla \partial_{t} \varphi\right\|^{2}-|A(\theta-\theta_0)|\left\|\partial_{t} \varphi\right\|^{2} -|\chi(\partial_{t} \varphi,\, \partial_{t} \sigma)|\notag\\
& \geq \frac{3B}{4}\left\|\nabla \partial_{t} \varphi\right\|^{2} -C \big(1+\left\|\partial_{t} \varphi\right\|_{(H^1)'}^{2} +\left\| \partial_{t} \sigma\right\|_{(H^1)'}^2 +\left\| \varphi\right\|_{H^1}^{2}\big).
\notag
\end{align}
Combining the above estimates, we deduce from  \eqref{mu} that
\begin{align}
&\frac{d}{d t}\Big(\frac{1}{2}\|\nabla \mu\|^{2}+(\bm{v} \cdot \nabla \varphi, \mu)\Big) +\frac{B}{2}\|\nabla \partial_{t} \varphi\|^{2} \notag\\
&\quad \leq \frac{\varpi}{4}\left\|\partial_{t} \bm{v}\right\|^{2} + C \big(1+\left\|\partial_{t} \varphi\right\|_{(H^1)'}^{2}+ \left\| \partial_{t} \sigma\right\|_{(H^1)'}^2 +\left\| \varphi\right\|_{H^1}^{2}\big) \notag\\
&\qquad  +C \big(1+\|\varphi\|_{H^{2}}^{2}+\|\nabla\boldsymbol{v}\|^{2}\big) \big(1+\|\nabla \mu\|^{2}+\|\sigma\|_{H^1}^2\big).
\label{22mu}
\end{align}

%%%%%%%%%%%%%%%%%%%%%%%%%%%%%%%

\textbf{Estimate for $\|\nabla \sigma\|$}. Multiplying \eqref{f2.b} by $-\Delta\sigma$ and integrating over $\Omega$, we get
\be
\frac{1}{2} \frac{d}{d t}\left\|\nabla\sigma\right\|^{2}-(\bm{v} \cdot \nabla \sigma,\Delta\sigma)+\|\Delta \sigma\|^2= \chi (\Delta \varphi,\Delta\sigma)+\mathcal{C}( h(\varphi) \sigma,\Delta\sigma) - (S,\Delta\sigma ).
\ee
By the Gagliardo--Nirenberg inequality and Young's inequalities, we see that
\begin{align}
|(\bm{v} \cdot \nabla \sigma,\Delta\sigma)|
&\le \|\bm{v}\|_{\bm{L}^{4}}\| \nabla\sigma\|_{\bm{L}^{4}}\|\Delta\sigma\|\notag\\
&\le\frac{1}{8}\|\Delta\sigma\|^2 + C\|\bm{S}\bm{v}\|^{\frac{1}{2}} \|\bm{v}\|^{\frac{3}{2}}\|\sigma\|_{H^2} \|\sigma\|_{H^1}\notag\\
&\leq \frac{1}{8}\|\Delta\sigma\|^2 + C\|\bm{S}\bm{v}\|^{\frac{1}{2}} \|\bm{v}\|^{\frac{3}{2}}(\|\Delta \sigma\|+\|\sigma\|)\|\sigma\|_{H^1} \notag\\
&\le\frac{1}{4}\|\Delta\sigma\|^2 +\frac{\eta_*}{8}\|\bm{S}\bm{v}\|^{2} +
C\big(\|\bm{v}\|^{6}\|\sigma\|_{H^1}^{4} +\|\bm{v}\|^{6}+\|\sigma\|_{H^1}^{4}\big).\notag
\end{align}
The remaining terms can be easily handled by using Young's inequality
\begin{align*}
|\chi (\Delta \varphi,\Delta\sigma)+\mathcal{C}( h(\varphi) \sigma,\Delta\sigma) - (S,\Delta\sigma ) | \leq \frac14\|\Delta \sigma\|^2+C\big(\|\Delta\varphi\|^2+\|\sigma\|^2+\|S\|^2\big).
\end{align*}
Hence, we have
\begin{align}
&\frac{1}{2} \frac{d}{d t}\left\|\nabla\sigma\right\|^{2}+\frac{1}{2}\|\Delta \sigma\|^2 \notag\\
&\quad \leq \frac{\eta_*}{8}\|\bm{S}\bm{v}\|^{2}+ C \big(\left\|\Delta\varphi\right\|^{2} + \|\bm{v}\|^{6}\|\sigma\|_{H^1}^{4} +\|\bm{v}\|^{6} +\|\sigma\|_{H^1}^{4}+\left\|S\right\|^{2}\big).
\label{2sig}
\end{align}
%%%%%%%%%%%%%%%%%%%%%%%%%%%%%%%%

\textbf{Higher order estimate}. Adding  \eqref{2v1}, \eqref{22mu} and \eqref{2sig} together, and making use of the lower order estimate \eqref{estimate1}, we arrive at
\be
\frac{d}{d t}\Lambda_1(t)  +  \mathcal{M}(t)  \leq    \mathcal{R}_{1}(t)\Lambda_2(t)+\mathcal{R}_{2}(t),\quad   \forall\,t\in[0,T].
\label{2menergy}
\ee
with
\begin{align*}
&\Lambda_1(t)=\frac12\|\nabla \bm{v}\|^{2}+\frac{1}{2}\|\nabla \mu\|^{2}+(\bm{v} \cdot \nabla \varphi, \mu)+\frac{1}{2}\left\|\nabla\sigma\right\|^{2},
\notag\\
&\mathcal{M}(t) =\frac{\eta_{*}}{8}\|\bm{S} \bm{v}\|^{2}+\frac{3\varpi}{4}\|\partial_{t} \bm{v}\|^{2} +\frac{B}{2}\|\nabla \partial_{t} \varphi\|^{2} +\frac{1}{2}\|\Delta \sigma\|^2,\\
&\mathcal{R}_{1}(t) = C\big(1+\|\nabla \bm{v}\|^{2}+\|\varphi\|_{H^{2}}^{4}+\|\nabla \sigma\|^2\big),\\
&\Lambda_2(t)=1+\|\nabla \bm{v}\|^{2}+\|\nabla \mu\|^{2}+\|\nabla\sigma\|^2,\\
&\mathcal{R}_{2}(t) =C\big(1+\left\|\partial_{t} \varphi\right\|_{(H^1)'}^{2} +\left\| \partial_{t} \sigma\right\|_{(H^1)'}^2 + \left\|\Delta  \varphi\right\|^{2}+ \left\|S\right\|^{2} \big),
\end{align*}
where the constant $C$ may depend on $\|\bm{v}_0\|$, $\|\varphi_0\|_{H^1}$, $\|\sigma_0\|$, $\int_\Omega \varPsi(\varphi_0)dx$, coefficients of the system, $\Omega$ and $T$. Recalling the proof of \cite[Theorem 4.1]{GMT}, we also have
\be\begin{aligned}
	|(\bm{v} \cdot \nabla \varphi, \mu)|
&=|(\bm{v}, \varphi \nabla  \mu)| \leq C\left\|\bm{v}\right\|_{\bm{L}^4}^2\|\varphi\|^2_{L^4}+\frac{1}{4}\left\|\nabla \mu\right\|^{2}\\
	&\leq\frac{1}{4}\left\|\nabla \bm{v}\right\|^2+\frac{1}{4}\left\|\nabla \mu\right\|^{2}+C_3,\notag
\end{aligned}
\ee
where the constant $C_3$ depends on $C_1$ in \eqref{estimate1}. Then we have
\be
\Lambda_1+(1+C_{3})\geq \frac{1}{4}\Lambda_2\geq \frac{1}{4}\big(\Lambda_1-C_{3}\big).
\label{equiv}
\ee
Integrating \eqref{2menergy} with respect to time and using \eqref{equiv}, we arrive at
$$
\Lambda_1(t)+\int_0^t\mathcal{M}(s) ds \leq \Lambda_1(0)+ 4\int_0^t\mathcal{R}_1(s)\Lambda_1(s)ds +C_4\int_0^t\big(\mathcal{R}_1(s)+\mathcal{R}_2(s)\big)ds.
$$
Keeping in mind the facts $\mathcal{R}_{1}(t)$, $\mathcal{R}_{2}(t)\in L^1(0,T)$ due to \eqref{estimate1}, by a suitable version of Gronwall's lemma (see e.g., \cite[Lemma 3.1]{GL17e}), we obtain
\begin{align}
&\Lambda_1(t) +\int_{0}^t \mathcal{M}(s)\,ds
\leq \Lambda_3(t) +4\int_{0}^t \left[ \Lambda_3(s)\mathcal{R}_{1}(s) \exp\Big(4\int_{s}^t\mathcal{R}_{1}(r)\,dr\Big)\right]ds,\quad \forall\,t\in [0,T],
\notag
\end{align}
where
$$
\Lambda_3(t)=\Lambda_1(0)+C_4\int_{0}^t \big(\mathcal{R}_{1}(s)+\mathcal{R}_{2}(s)\big)\, ds.
$$
The above inequality together with \eqref{estimate1} and \eqref{2amuL2H1} immediately yields the following higher order estimates
\begin{align}
&\sup_{t \in[0, T]}\left(\| \bm{v}(t)\|_{\bm{H}^1}^{2}+\|\mu(t)\|_{H^1}^{2}+\|\sigma(t)\|_{H^1}^{2}\right) \leq C,\label{muphi}
\\
&\int_{0}^{t} \big(\|\bm{S} \bm{v}(s)\|^{2}+\|\partial_{t} \bm{v}(s)\|^{2} +\|\nabla \partial_{t} \varphi(s)\|^{2} +\| \sigma(s)\|_{H^2}^2\big) \, ds\leq C,\quad \forall\,t\in[0,T],\label{phih1}
\end{align}
where the positive constant $C$ depends on $\|\bm{v}_0\|_{\bm{H}^1}$, $\|\mu_0\|_{H^1}$, $\|\sigma_0\|_{H^1}$, $\|\varphi_0\|_{H^1}$, $\int_\Omega \varPsi(\varphi_0)dx$, coefficients of the system, $\Omega$ and $T$. Moreover, let us consider the elliptic problem for $\varphi$:
\be
\begin{cases}
-B\Delta \varphi +A\varPsi_{0}^{\prime}(\varphi)=\mu+\theta_0\varphi+\chi\sigma,\quad \text { in } \Omega, \\
\partial_{\bm{n}} \varphi =0, \qquad \qquad \qquad \qquad \qquad\quad\,
\text { on } \partial \Omega.
\end{cases}
\label{ellsol}
\ee
Then  from \eqref{muphi}--\eqref{phih1} and Lemma \ref{a.2} we infer that for any $q\in [2,+\infty)$,
\begin{align}
\|\varphi\|_{L^\infty(0,T;W^{2, q}(\Omega))} +\|\varPsi^{\prime}(\varphi)\|_{L^\infty(0,T; L^{q}(\Omega))} + \|\varPsi^{\prime \prime}(\varphi)\|_{L^\infty(0,T;L^{q}(\Omega))}
\leq C.
\label{psis2}
\end{align}

 \textbf{Estimates on time derivatives}.
 From \eqref{mut}, \eqref{phih1} and the Poincar\'{e}--Wirtinger inequality, we see that
\begin{equation}
\left \|  \partial_{t}\varphi\right \|_{L^{2}(0,T;H^1(\Omega))} \le C.
\label{phimt1}
\end{equation}
On the other hand, a similar argument like in  \cite[(4.15)--(4.18)]{LW} leads to
\begin{align}
& \left \| \partial_{t}\bm{v}\right \|_{ L^{2}(0,T;\bm{L}^2(\Omega))} \le C,\quad \left \| \partial_{t}\sigma\right \|_{ L^2(0,T;L^2(\Omega))} \le C. \label{vsigmt2d}
\end{align}

\textbf{Some further estimates for $\mu$}. It follows from \eqref{muphi}, \eqref{phih1}, \eqref{psis2} and \eqref{phimt1} that
\begin{equation}
\begin{aligned}
& \int_{0}^T \left \|\bm{v}(t)\cdot \nabla\varphi(t)\right \|_{H^{1}}^2\,dt\\
&\quad \leq \|\bm{v}\|_{L^{\infty}(0,T;\bm{L}^4(\Omega))}^2 \int_{0}^T \|  \varphi(t)\|_{W^{2,4}}^2 \ dt+ \|\varphi\|_{L^{\infty}(0,T;W^{1,4}(\Omega))}^2\int_{0}^T\|\bm{v}(t)\|_{\bm{W}^{1,4}}^2 \ dt\\
&\quad \le C,
\end{aligned}
\nonumber
\end{equation}
which yields $\partial_{t} \varphi+\boldsymbol{v} \cdot \nabla \varphi+\alpha(\varphi-c_0) \in L^{2}(0, T ; H^{1}(\Omega))$.
Then from equation \eqref{f1.a} and the classical elliptic estimate for $\mu$, we conclude that
\begin{align}
\mu \in L^{2}\left(0, T ; H^{3}(\Omega)\right).
\label{muL2H3}
\end{align}

\subsection{Proof of Theorem \ref{2main}}

The formal \textit{a priori} estimates obtained in the previous section can be justified by the approximate solutions $(\bm{v}^m, \varphi^m, \mu^m, \sigma^m)$ obtained in Proposition \ref{p1}. It is straightforward to check that the constants in those estimates are independent of the index $m$ as well as the existence time $T_m$. Hence, we are able to pass to the limit as $m\to+\infty$ (up to a subsequence) and obtain the existence of a global strong solution $(\bm{v}, \varphi, \mu, \sigma)$ to problem \eqref{f3.c}--\eqref{ini0} on $[0,T]$ (cf. \cite{H} for the case of global weak solutions). Finally, the pressure $p$ can be recovered as  in \cite{LW} (see \cite{S}). This procedure is standard, so we omit the details here.

Since the strong solution  can also be viewed as a weak one in Definition \ref{maind}, its uniqueness (as long as it exists) is then an immediate consequence of the continuous dependence estimate for weak solutions (in certain weaker norms) obtained in \cite[Theorem 2.2]{H}.

Finally, we mention that some additional regularity of the strong solution $(\bm{v}, \varphi, \mu, \sigma)$ can be obtained once the separation property of $\varphi$ is proven (i.e., \eqref{sep4}), see \eqref{phih3}--\eqref{conti} in Remark \ref{regh1} below.

The proof of Theorem \ref{2main} is complete.
\hfill $\square$

%%%%%%%%%%%%%%%%%%%%%%%%%%%%%
 \section{Strong Solutions: Separation Property}
 \label{phs} \setcounter{equation}{0}

 In this section, we study the separation property of the phase function $\varphi$ and some further regularity of the strong solutions for positive time.

 \subsection{Strict separation property on $[0,T]$}

 \textbf{Proof of Theorem \ref{phase}, Part I}.
Let $(\bm{v},\varphi, \mu, \sigma)$ be the global strong solution to problem \eqref{f3.c}--\eqref{ini0} on $[0,T]$ obtained in Theorem \ref{2main}. Then consider again the elliptic problem \eqref{ellsol} and apply Lemma \ref{se}, we infer from \eqref{muphi} that
 $\varPsi_0^{\prime}(\varphi) \in L^{\infty}\left(0, T ;W^{1,q}(\Omega)\right)$ for any $q\in[2,+\infty)$,
 and there exists a constant $\delta_T\in (0,1)$ such that
 \be
 \|\varphi(t)\|_{L^{\infty}} \leq 1-\delta_T,\quad \forall\, t \in[0,T].
 \label{sep4}
 \ee
This completes the proof for the first conclusion in  Theorem \ref{phase}. \hfill $\square$

\begin{remark}\label{regh1}
The strict separation property \eqref{sep4} enables us to say more about the regularity of strong solutions on $[0,T]$. Since $\varPsi\in C^3[-1+\delta_T,\, 1-\delta_T]$, we  can deduce from \eqref{psis2} and \eqref{sep4} that $\varPsi' (\varphi )\in L^\infty (0 ,T ;H^2(\Omega ))$. Now applying the classical elliptic estimate to the Neumann problem \eqref{ellsol}, we get
 \be
 \varphi \in L^\infty (0 ,T ;H^3 (\Omega )),\quad \varphi\in L^2(0,T; H^4(\Omega)).
 \label{phih3}
 \ee
By a similar approximate argument like in \cite[Theorem 4.1]{GMT}, using the separation property \eqref{sep4} and estimates \eqref{phimt1}--\eqref{vsigmt2d}, we can infer from \eqref{f4.d}, that for any $\zeta\in H^1(\Omega)$, the following term
\begin{align*}
|\langle\partial_t \mu, \zeta\rangle_{(H^1)',H^1}|
&\leq |B(\nabla \partial_t \varphi, \nabla \zeta)|+|A(\varPsi''(\varphi)\partial_t\varphi, \zeta)|+|\chi (\partial_t\sigma, \zeta)|\\
&\leq B\|\nabla \partial_t \varphi\|\|\nabla \zeta\|+A\max_{s\in[-1+\delta_T,\,1-\delta_T]}|\varPsi''(s)| \|\partial_t\varphi\|\|\zeta\| +|\chi|\|\partial_t\sigma\|\|\zeta\|
\end{align*}
is bounded, which implies
\be
\partial_{t} \mu\in L^{2}\left(0, T ; H^{1}(\Omega)'\right).
\label{muth1}
\ee
Note that here we do not need the assumption on the convexity of $\varPsi''$ as required in \cite[Section 2.2]{GMT}.
Then by the Aubin--Lions--Simon lemma (see \cite{si87}), we can conclude the time continuity property of strong solutions from \eqref{phih1}, \eqref{phimt1}, \eqref{vsigmt2d}, \eqref{muL2H3}, \eqref{phih3} and \eqref{muth1} such that
\begin{equation}
\begin{aligned}
&\bm{v}\in C([0,T]; \bm{H}_{0,\mathrm{div}}^1(\Omega)),\quad \varphi\in C_w([0,T]; H^3(\Omega)),\\
&\mu\in C([0,T];H^1(\Omega)),\qquad\   \sigma\in C([0,T];H^1(\Omega)).
\end{aligned}
\label{conti}
\end{equation}
\end{remark}

 \subsection{Uniform separation property on $[0,+\infty)$}

 \begin{lemma}
 Assume that the additional assumption \eqref{ass1} is satisfied. Then the strong solution to problem \eqref{f3.c}--\eqref{ini0} exists on  $[0,+\infty)$ and satisfies the following uniform-in-time estimates:
 \begin{align}
 &\|\bm{v}(t)\|^2 +\|\varphi(t)\|_{H^1}^2+\|\sigma(t)\|^2 \notag \\
 &\quad +\int_{t}^{t+1}\big(\|\nabla\bm{v}(s)\|^2
 +\|\varphi(s)\|_{H^2}^4 + \|\nabla \mu(s)\|^2+ \|\nabla \sigma(s)\|^2\big)\,  d s \le C,\quad \forall\, t\geq 0,
 \label{uni1}\\
 &\int_{t}^{t+1}\big( \left\|\partial_{t} \varphi(s)\right\|_{(H^1)'}^{2} +\left\| \partial_{t} \sigma(s)\right\|_{(H^1)'}^2\big)\,ds
 \leq C,\quad \forall\, t\geq 0,\label{uni2}
 \end{align}
 where the constant $C$ depends on $\|\bm{v}_0\|_{\bm{H}^1}$, $\|\mu_0\|_{H^1}$, $\|\sigma_0\|_{H^1}$, $\|\varphi_0\|_{H^1}$, $\int_\Omega \varPsi(\varphi_0)dx$, coefficients of the system and $\Omega$, but not on $t$.
 \end{lemma}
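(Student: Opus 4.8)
The plan is to revisit the \textit{a priori} estimates of Section \ref{ap} and upgrade them from estimates on a fixed interval $[0,T]$ to estimates that are uniform in time, which in particular forces the strong solution to be global in $[0,+\infty)$. The first step is to re-examine the basic energy law \eqref{BEL} under the standing assumption \eqref{ass1}: with $\mathcal{C}=0$ the only troublesome right-hand side terms are $-\alpha(\varphi-c_0)\mu$ and $\int_\Omega S(\sigma+\chi(1-\varphi))\,dx$. Using \eqref{mph2} (so $\overline{\varphi}(t)-c_0$ decays exponentially when $\alpha>0$, and $\overline{\varphi}\equiv c_0$ when $\alpha=0$), the control $|\overline{\mu}|\le C(1+\|\nabla\mu\|)$ from $\|\varPsi'(\varphi)\|_{L^1}\le C(1+\|\nabla\mu\|)$, and the hypothesis $\|S\|_{L^1(0,+\infty;L^1)}+\sup_t\int_t^{t+1}\|S(s)\|^2ds\le S_0$, one absorbs these terms into the dissipation. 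A Gronwall-type argument with exponentially decaying/summable forcing (the ``uniform Gronwall lemma'', see e.g. \cite[Lemma 3.1]{GL17e} and standard arguments for dissipative systems) then yields the uniform bound
\be
\|\bm{v}(t)\|^2+A\int_\Omega\varPsi(\varphi(t))\,dx+\frac{B}{2}\|\nabla\varphi(t)\|^2+\|\sigma(t)\|^2\le C,\quad\forall\,t\ge 0,\nonumber
\ee
together with the uniform integral bounds on $\|\nabla\bm{v}\|^2$, $\|\nabla\mu\|^2$ and $\|\nabla(\sigma+\chi(1-\varphi))\|^2$ over every window $[t,t+1]$. From $\|\varphi\|_{L^\infty}\le 1$, $\|\nabla\varphi\|$ bounded, and the elliptic estimate of Lemma \ref{a.2} applied to \eqref{ellsol}, one also gets $\int_t^{t+1}\|\varphi(s)\|_{W^{2,q}}^2\,ds\le C$ uniformly, hence in particular the $\|\varphi\|_{H^2}^4$ integral bound claimed in \eqref{uni1}; the time-derivative bounds \eqref{uni2} then follow by testing \eqref{f1.a} and \eqref{f2.b} with $H^1$ functions and using the already-established uniform integral bounds, exactly as in \eqref{phimt1}--\eqref{vsigmt2d} but on moving windows.

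The second and more delicate step is to make the higher-order estimate \eqref{2menergy} uniform in time. The differential inequality has the schematic form $\frac{d}{dt}\Lambda_1+\mathcal{M}\le \mathcal{R}_1\Lambda_2+\mathcal{R}_2$ with $\Lambda_1\sim\Lambda_2\sim\|\nabla\bm{v}\|^2+\|\nabla\mu\|^2+\|\nabla\sigma\|^2$ (modulo the lower-order correction \eqref{equiv}), and the point is that $\mathcal{R}_1$ and $\mathcal{R}_2$ are not merely in $L^1(0,T)$ but have, by the first step, \emph{uniformly bounded} integrals over every unit time window. I would therefore invoke the uniform Gronwall lemma: if $y'\le g y+ f$ with $\int_t^{t+1}g$, $\int_t^{t+1}f$, and $\int_t^{t+1}y$ all bounded by constants independent of $t$, then $y(t)\le C$ for all $t\ge 1$; the bound on $[0,1]$ is already covered by the finite-time analysis of Section \ref{ap}. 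The required uniform bound on $\int_t^{t+1}\Lambda_1(s)\,ds$ comes precisely from the integral dissipation estimates of the first step (the $L^2_{loc}$ bounds on $\nabla\bm{v}$, $\nabla\mu$, $\nabla\sigma$). This gives $\sup_{t\ge 0}(\|\bm{v}(t)\|_{\bm{H}^1}^2+\|\mu(t)\|_{H^1}^2+\|\sigma(t)\|_{H^1}^2)\le C$ and the uniform window bound on $\int_t^{t+1}\mathcal{M}(s)\,ds$, which contains $\|\bm{S}\bm{v}\|^2$, $\|\partial_t\bm{v}\|^2$, $\|\nabla\partial_t\varphi\|^2$ and $\|\Delta\sigma\|^2$; re-running the elliptic argument for \eqref{ellsol} with these uniform bounds yields the uniform $W^{2,q}$ control of $\varphi$ and closes \eqref{uni1}. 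Since all constants are finite and independent of the (a priori) existence time, the solution does not blow up and a standard continuation argument extends it to $[0,+\infty)$.

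The main obstacle I anticipate is ensuring that the correction term $(\bm{v}\cdot\nabla\varphi,\mu)$ inside $\Lambda_1$ does not spoil the coercivity/equivalence \eqref{equiv} in a time-uniform way — on a finite interval the constant $C_3$ there depends only on the lower-order constant $C_1$, but for the infinite-time statement one must check that the analogue of $C_1$ (now the uniform lower-order constant from the first step) is genuinely $t$-independent, which it is by the dissipativity argument above. A secondary technical point is handling the case $\alpha=0$ separately in \eqref{mph2}: then mass is merely conserved rather than decaying, so the terms $-\alpha(\varphi-c_0)\mu$ vanish identically and no decay is needed; when $\alpha>0$ the exponential decay of $\overline{\varphi}-c_0$ must be used to control $\int_\Omega \alpha(\varphi-c_0)\mu$, splitting $\mu=\overline{\mu}+(\mu-\overline{\mu})$ and using $|\overline{\mu}|\le C(1+\|\nabla\mu\|)$. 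Everything else is a routine but careful transcription of the finite-time estimates of Section \ref{ap} onto sliding unit windows, replacing each application of Gronwall's lemma with its uniform counterpart.
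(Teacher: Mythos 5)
Your overall architecture (uniform lower-order bounds first, then a uniform Gronwall argument on the higher-order inequality \eqref{2menergy}) is the right one, and your second step is essentially what the paper does in the proof of Theorem \ref{phase}, Part II. However, the first step --- which is the actual content of this lemma --- has a genuine gap: you never explain how the basic energy law becomes a \emph{dissipative} differential inequality. If you merely ``absorb the troublesome terms into the dissipation'' (and your splitting $\mu=\overline{\mu}+(\mu-\overline{\mu})$ with $|\overline{\mu}|\le C(1+\|\nabla\mu\|)$ does exactly that, bounding the Oono term by $\tfrac12\|\nabla\mu\|^2+C$), you obtain $\frac{d}{dt}\mathcal{E}+\tfrac12\mathcal{D}\le C(1+\|S\|^2)$, which upon integration only gives $\mathcal{E}(t)\lesssim \mathcal{E}(0)+Ct$ --- a linearly growing bound. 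Invoking the uniform Gronwall lemma at this stage is circular, since that lemma requires a $t$-independent bound on $\int_t^{t+1}\mathcal{E}(s)\,ds$, which is precisely what is not yet available. The missing ingredients, which constitute the core of the paper's proof, are: (i) the auxiliary $L^2$-estimates obtained by testing \eqref{f1.a} with $\varphi$ and \eqref{f2.b} with $\sigma$, which generate the \emph{extra} dissipation terms $\|\Delta\varphi\|^2$ and $\|\nabla\sigma\|^2$ absent from \eqref{D}; (ii) Korn's and Poincar\'e's inequalities to dominate $\|\bm{v}\|^2$ by $\|D\bm{v}\|^2$; (iii) the identity $\overline{\sigma}(t)=\overline{\sigma_0}+\int_0^t\overline{S(s)}\,ds$ together with the hypothesis $S\in L^1(0,+\infty;L^1(\Omega))$ (this is the role of that assumption, which you list but never use) so that Poincar\'e--Wirtinger gives $\|\sigma\|\le C_P\|\nabla\sigma\|+C$; and (iv) the interpolation $\|\nabla\varphi\|^2\le\xi\|\Delta\varphi\|^2+C_\xi$. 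Only after these steps does one obtain $\frac{d}{dt}\widetilde{\mathcal{E}}+\widetilde{\mathcal{D}}+\zeta\widetilde{\mathcal{E}}\le C(1+\|S\|^2)$ and may apply a Gronwall lemma of the type in \cite{GGP} to conclude uniform boundedness of the energy and of $\int_t^{t+1}\widetilde{\mathcal{D}}$. (The paper additionally exploits the convexity of $\varPsi_0$ to turn the Oono term into a good term; your alternative treatment of that single term is acceptable, but it does not substitute for the dissipativity mechanism.)

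A secondary slip: the bound $\int_t^{t+1}\|\varphi(s)\|_{W^{2,q}}^2\,ds\le C$ does not ``in particular'' give $\int_t^{t+1}\|\varphi(s)\|_{H^2}^4\,ds\le C$ --- an $L^2$-in-time bound never implies an $L^4$-in-time bound. The correct route is the interpolation estimate $\|\Delta u\|\le C\|\nabla u\|^{1/2}\|\nabla f\|^{1/2}$ from Lemma \ref{a.2} applied to \eqref{ellsol}, combined with the uniform bound on $\|\nabla\varphi(t)\|$ and the window bounds on $\|\nabla\mu\|^2$ and $\|\nabla\sigma\|^2$, which yields $\|\varphi\|_{H^2}^4\lesssim \|\mu\|_{H^1}^2+\|\sigma\|_{H^1}^2+1$ and hence the claimed $L^4_{\mathrm{loc}}$-in-time control.
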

 \begin{proof}
The existence and uniqueness of a global strong solution $(\bm{v}, \varphi, \mu, \sigma)$ on an arbitrary time interval $[0,T]$ are guaranteed by Theorem \ref{2main}. Thus, we only need to derive \textit{a priori} estimates that are uniform in time.

We keep in mind that
 \be
 \|\varphi(t)\|_{L^\infty}\leq 1,\quad t\geq 0.\label{inif}
 \ee
 Next, integrating \eqref{f2.b} over $\Omega$, we get
 \be
 \frac{d}{dt}\int_\Omega \sigma dx =\int_\Omega S(x,t)\, dx,\notag
 \ee
 so that
 \be
 \overline{\sigma}(t)=\overline{\sigma_{0}}+\int_0^t \overline{S(s)}ds,\quad \forall\, t\geq 0.
 \label{averphie}
 \ee
 Then it follows from the Poincar\'{e}--Wirtinger inequality that
 \be
 \|\sigma\|\leq C_P(\|\nabla \sigma\|+|\overline{\sigma}|)\leq C_P\|\nabla \sigma\|+C,
 \label{sigmaa}
 \ee
 where $C_P$ depends on $\Omega$ and $C$ depends on $\overline{\sigma_0}$, $S_0$, $\Omega$.

  Multiplying \eqref{f3.c} by $\bm{v}$, \eqref{f1.a} by $\mu$, \eqref{f4.d} by  $\partial_t \varphi$,\ \eqref{f2.b} by $\sigma+\chi(1-\varphi)$, integrating over $\Omega$ and adding the resultants together, we obtain (i.e., taking $\mathcal{C}=0$ in \eqref{BEL})
 \begin{align}
 \frac{d}{dt}\mathcal{E}(t)
 +\mathcal{D}(t)+\int_\Omega \alpha(\varphi-c_0)\mu \, dx =   \int_\Omega S(\sigma +\chi(1-\varphi))\, dx,
 \label{BEL2}
 \end{align}
 where
 \begin{align}
 \mathcal{E}(t)&=\frac{1}{2}\|\bm{v}(t)\|^2
 + \int_\Omega A\varPsi(\varphi(t))\, dx + \frac{B}{2}\|\nabla \varphi(t)\|^2
 +\frac{1}{2}\|\sigma(t)\|^2\notag\\
 &\quad
 +\int_\Omega \chi\sigma(t)(1-\varphi(t))\, dx,\label{E}\\
 \mathcal{D}(t)& =\int_\Omega 2\eta(\varphi(t))|D\bm{v}(t)|^2\, dx + \|\nabla \mu(t)\|^2+\|\nabla(\sigma(t)+\chi(1-\varphi(t))\|^2.\label{D}
 \end{align}
 Using the convexity of $\varPsi_0$ and the estimate \eqref{inif}, we can treat the third term on the left-hand side of \eqref{BEL2} as (cf. \cite{GGM2017, H})
 \begin{align*}
 \alpha\int_\Omega(\varphi-c_0)\mu\,dx
 &= \alpha B\|\nabla \varphi\|^2 + \alpha A\int_\Omega \varPsi'(\varphi)(\varphi-c_0)\,dx - \alpha\chi\int_\Omega \sigma(\varphi-c_0)\,dx\notag\\
 &\geq \alpha B\|\nabla \varphi\|^2
 + \alpha A\int_\Omega \varPsi_0(\varphi)\,dx
-\alpha A\int_\Omega \varPsi_0(c_0)\,dx \notag\\
 &\quad
-\alpha A\theta_0\int_\Omega\varphi(\varphi-c_0)\,dx
 - \alpha\chi\int_\Omega \sigma(\varphi-c_0)\,dx\notag\\
 & \geq \alpha\Big(\frac{ B}{2}\|\nabla \varphi\|^2+ A\int_\Omega \varPsi(\varphi)\,dx\Big)-\alpha \xi \|\sigma\|^2 - C\alpha(1+\xi^{-1}),
 %\label{mueea}
 \end{align*}
 where the positive constant $C$ may depend on $A$, $\varPsi_0$, $c_0$, $\theta_0$, $\chi$, $\Omega$, but is independent of the initial data, while $\xi\in (0,1)$ is a small parameter to be determined later. Next, using the Cauchy--Schwarz inequality, Young's inequality and \eqref{inif}, we easily see that
 \begin{align*}
 \int_\Omega S(\sigma +\chi(1-\varphi)) dx
 \leq \xi \|\sigma\|^2+\xi^{-1}\|S\|^2+C,
 \end{align*}
 where $\xi\in (0,1)$, the positive constant $C$ may depend on $\chi$, $\Omega$, but is independent of the initial data. From \eqref{inif} and the assumption (H2), we see that  $|\int_\Omega \varPsi(\varphi(t))dx|\leq |\Omega|\max_{s\in[-1,1]}|\varPsi(s)|$. On the other hand,  integration by parts and Young's inequality yield
 $$
 \|\nabla \varphi\|^2\leq \|\Delta \varphi\|\|\varphi\|\leq \xi\|\Delta\varphi\|^2+\xi^{-1}|\Omega|,\quad \forall\, \xi\in (0,1).
 $$
 Then we infer from the above estimates and \eqref{BEL2} the following inequality
 \begin{align}
 &\frac{d}{dt}\mathcal{E}(t)
 +\mathcal{D}(t) + (1+\alpha)\Big(\frac{ B}{2}\|\nabla \varphi\|^2+ A\int_\Omega \varPsi(\varphi)\,dx \Big) +\frac{\xi}{2}\|\sigma(t)\|^2 \notag\\
 &\qquad +\xi \int_\Omega \chi\sigma(t)(1-\varphi(t))\, dx \notag\\
 &\quad  \leq  2\xi(1+\alpha)C_P^2 \|\nabla \sigma\|^2 +\frac{\xi(1+\alpha) B}{2}\|\Delta \varphi\|^2 +\xi^{-1}\|S\|^2+C(1+\alpha)(1+\xi^{-1}).
 \label{BEL3}
 \end{align}
 Note note that here we allow $\alpha=0$. Next, multiplying \eqref{f4.d} by $\varphi$ and integrating over $\Omega$, we get
 %%%%%%%%%%%%%%%%%%%%%%%%%%%%%%%%%%%%%%%%%%%%%
 \begin{align}
 &\frac12 \frac{d}{dt}\| \varphi\|^2
 +B\|\Delta\varphi\|^2 + \alpha \|\varphi\|^2
 =A \int_{\Omega}\varPsi'(\varphi)\Delta\varphi dx -\chi\int_\Omega \sigma\Delta\varphi dx +\alpha c_0\int_\Omega  \varphi dx.
 \label{energy e}
 \end{align}
 The first and second terms on the right-hand side of \eqref{energy e} can be estimated as follows:
 \begin{align}
 A\int_{\Omega}\varPsi'(\varphi)\Delta\varphi \ dx
 &=- A\int_{\Omega}\varPsi_{0}''(\varphi)|\nabla\varphi|^2 \, dx
 -A\theta_{0}\int_{\Omega}\varphi\Delta\varphi\ dx\notag\\
 &\le \frac{B}{8}\|\Delta\varphi\|^2  +\frac{2A^2\theta_0^2}{B}\|\varphi\|^2.
 \notag
 \end{align}
 \begin{align}
 -\chi \int_{\Omega} \sigma \Delta\varphi dx
 &=\chi\int_{\Omega}\nabla\varphi\cdot \nabla\sigma \, dx \leq \chi^2\|\nabla\varphi\|^2+\frac{1}{4}\|\nabla\sigma\|^2 \notag\\
 & \le \frac{B}{8}\|\Delta\varphi\|^2  +\frac{1}{4}\|\nabla\sigma\|^2 +\frac{2\chi^4}{B}\|\varphi\|^2,
 \label{diss2}
 \end{align}
 while the last term can be simply treated as
 $ \alpha c_0\int_\Omega  \varphi dx\leq \alpha |\Omega|$.
 Inserting the above estimates into \eqref{energy e}, we obtain
 \begin{align}
 \frac12 \frac{d}{dt} \| \varphi\|^2
 +\frac{3B}{4}\|\Delta\varphi\|^2 \leq \frac{1}{4}\|\nabla\sigma\|^2 +C,
 \label{energy e1}
 \end{align}
 where $C$ only depends on $A$, $B$, $\theta_0$, $\chi$, $\alpha$ and $\Omega$. Now multiplying \eqref{f2.b} by $\sigma$ and integrating over $\Omega$, we get
 \begin{align}
 \frac12 \frac{d}{dt}\|\sigma\|^2
 +\|\nabla \sigma\|^2
 =-\chi\int_{\Omega} \sigma \Delta\varphi  \ dx +\int_\Omega S\sigma dx.
 \notag
 \end{align}
 In light of \eqref{diss2}, we get
 \begin{align}
 \frac12 \frac{d}{dt}\|\sigma\|^2
 +\frac34\|\nabla \sigma\|^2
 \leq \frac{B}{4}\|\Delta\varphi\|^2 +\xi\|\sigma\|^2 +\xi^{-1}\|S\|^2 +C.
 \label{energy e3}
 \end{align}
 Adding \eqref{energy e1} with \eqref{energy e3} and using \eqref{sigmaa}, we see that
 \begin{align}
 \frac12 \frac{d}{dt}\big(\| \varphi\|^2+\|\sigma\|^2\big)
 +\frac{B}{2}\|\Delta\varphi\|^2 + \frac12\|\nabla \sigma\|^2 & \leq \xi\|\sigma\|^2 +\xi^{-1}\|S\|^2+C\notag\\
 &\leq \xi C_P^2\|\nabla \sigma\|^2+\xi^{-1}\|S\|^2+C(1+\xi^{-1}).
 \label{energy e5}
 \end{align}

 Recalling Korn's inequality and Poincar\'{e}'s inequality, we get
 \be
 \|\bm{v}\| \le C_P'\|D\bm{v}\|, \label{po}
 \ee
 where the constant $C_P'$ only depends on $\Omega$. Then adding \eqref{BEL3} with \eqref{energy e5}, we can first take $\xi\in(0,1)$ to be sufficiently small and then find a positive constant $\zeta\in (0,\xi)$ such that
 \begin{align}
 &\frac{d}{dt}\widetilde{\mathcal{E}}(t)
 +\widetilde{\mathcal{D}}(t) + \zeta \widetilde{\mathcal{E}}(t)\leq  C(1+\|S\|^2),
 \label{BEL4}
 \end{align}
 where
 \begin{align}
 &\widetilde{\mathcal{E}}(t)=\mathcal{E}(t)+\frac12 \big(\| \varphi(t)\|^2+\|\sigma(t)\|^2\big)+C_5,\notag \\
 &\widetilde{\mathcal{D}}(t)=\frac12 \mathcal{D}(t)+ \frac{B}{4}\|\Delta \varphi(t)\|^2+ \frac{1}{4}\|\nabla \sigma(t)\|^2.\notag
 \end{align}
 The constants $\xi$, $\zeta$ as well as $C$ only depend on the coefficients of the system, $\Omega$, $\overline{\sigma_0}$, $S_0$. $C_5$ is a positive constant that guarantees $\widetilde{\mathcal{E}}(t)\geq 1$ and depends on $\varPsi$, $A$, $\chi$, $\Omega$, $\overline{\sigma_0}$, $S_0$, but not on other information of the initial data.

Applying a suitable Gronwall's lemma (see, e.g., \cite[Lemma 2.5]{GGP}) to the differential inequality \eqref{BEL4}, we deduce from the assumption \eqref{ass1} that for all $t \geq 0$:
 \begin{equation}
 \begin{aligned}
 \widetilde{\mathcal{E}}(t)
 \leq \widetilde{\mathcal{E}}(0) \mathrm{e}^{-\zeta t} + C \int_0^te^{-\zeta(t-s)}(1+\|S(s)\|^2)\,ds \leq \widetilde{\mathcal{E}}(0) \mathrm{e}^{-\zeta t}+C_6,
 \label{bd2}
 \end{aligned}
 \end{equation}
 and
 \begin{align}
 \int_{t}^{t+1} \widetilde{\mathcal{D}}(s) \, d s \leq \widetilde{\mathcal{E}}(0) \mathrm{e}^{-\zeta t}+C_7,
 \label{bd2a}
 \end{align}
 where the constants $C_6$, $C_7$ depend on the coefficients of the system, $\Omega$,  $\overline{\sigma_0}$, $S_0$, but not on other information of the initial data.
  Hence, in view of the definitions of $\widetilde{\mathcal{E}}(t)$ and $\widetilde{\mathcal{D}}(t)$, we can conclude from the dissipative estimates \eqref{bd2}, \eqref{bd2a} that
\begin{align}
 	&\|\bm{v}(t)\|^2 +\|\varphi(t)\|_{H^1}^2+\|\sigma(t)\|^2 \notag \\
 &\quad +\int_{t}^{t+1}\big(\|\nabla\bm{v}(s)\|^2 +\|\Delta\varphi(s)\|^2 + \|\nabla \mu(s)\|^2+ \|\nabla \sigma(s)\|^2\big)\,  d s \notag \\
 	& \le C,\quad \forall\, t\geq 0,
 \notag
 \end{align}
 where the constant $C$ is independent of $t$.

 Using the above uniform-in-time estimate and by the same argument as in \cite[Section 3.2]{H} (see also \cite{GGM2017,GGW} for the cases without coupling with $\sigma$), it is straightforward to check that for all $t \geq 0$,
 \begin{align}
 &\int_{t}^{t+1}\big(\|\varphi(s)\|_{H^2}^4+\| \mu(s)\|^{2}_{H^1(\Omega)}+ \|\varphi(s)\|^2_{W^{2,q}}+\|\varPsi_0'(\varphi(s))\|^2_{L^q}\big) \, d s \leq C,  \notag\\
 &\int_{t}^{t+1}\big( \left\|\partial_{t} \varphi(s)\right\|_{(H^1)'}^{2} +\left\| \partial_{t} \sigma(s)\right\|_{(H^1)'}^2\big)\,ds \leq C,\notag
 \end{align}
 where $q\in [2,+\infty)$ and the positive constant $C$ is independent of $t$.
  As a consequence, we arrive at our conclusions \eqref{uni1} and \eqref{uni2}.
 \end{proof}

\textbf{Proof of Theorem \ref{phase}, Part II}.
Using the uniform-in-time estimates \eqref{uni1}, \eqref{uni2} and following the procedure that we derive the differential inequality \eqref{2menergy}, we see that \eqref{2menergy} still holds with the constant $C$ now  being independent of $t$. Thus, the standard Gronwall lemma yields a local estimate for $\Lambda_1(t)$ on $[0,1]$, while the uniform Gronwall lemma (see \cite[Chapter III, Lemma 1.1]{T}) gives $\Lambda_1(t)\leq C$ for all $t\geq 1$. Combining these two estimates we see that $\Lambda_1(t)$ is uniformly bounded on the whole interval $[0,+\infty)$. Thus, together with \eqref{uni1}, \eqref{uni2}, we deduce that
\be
\|\mu(t)\|_{H^1}+\|\sigma(t)\|_{H^1}\leq C, \quad \forall\, t\geq 0,
\ee
where the positive constant $C$ is independent of $t$. Thanks to Lemma \ref{se}, we then find a constant $\widehat{\delta}\in (0,1)$ such that the uniform separation property holds:
\be
\|\varphi(t)\|_{L^{\infty}} \leq 1-\widehat{\delta},\quad \forall\, t \in[0,+\infty).\notag
\ee
The proof of Theorem \ref{phase} is complete.
\hfill$\square$

%%%%%%%%%%%%%%%%%%%%%%%%%%%%%%%%%%%%%%%%%%%%%%%%%%%%
 \section{Continuous Dependence}\label{cd}
 \setcounter{equation}{0}
In this section we derive a continuous dependence estimate with respect to the initial data and the source term for global strong solutions in natural energy norms. A weaker continuous dependence result has been obtained in \cite[Theorem 2.2]{H} for global weak solutions in the norm of the dual space $\bm{H}^1_{0,\mathrm{div}}(\Omega)'\times H^1(\Omega)'\times H^1(\Omega)'$.\medskip

\textbf{Proof of Corollary \ref{2main1}}.
Let $(\bm{v}_{1}, p_1, \varphi_{1},\mu_1,\sigma_{1})$ and $(\bm{v}_{2}, p_2, \varphi_{2},\mu_2,\sigma_{2})$ be two strong solutions to problem \eqref{f3.c}--\eqref{ini0} on $[0,T]$  given by Theorem \ref{2main} subject to the initial data $(\bm{v}_{01},\sigma_{01},\varphi_{01})$ and $(\bm{v}_{02},\sigma_{02},\varphi_{02})$, respectively. Denote the differences of functions by
 \begin{align*}
 &(\bm{v},p, \varphi,\mu,\sigma)
 =(\bm{v}_{1}-\bm{v}_{2},p_1-p_2, \varphi_{1}-\varphi_{2},\mu_1-\mu_2,\sigma_{1}-\sigma_{2}),\\
 &(\bm{v}_0, \varphi_0, \sigma_0,S)
 =(\bm{v}_{01}-\bm{v}_{02}, \varphi_{01}-\varphi_{02}, \sigma_{01}-\sigma_{02},S_1-S_2).
 \end{align*}
 Then it holds
 \begin{subequations}
 	\begin{alignat}{3}
 	&\partial_t  \bm{ v}+\bm{ v}_1 \cdot \nabla  \bm {v} +\bm{ v} \cdot \nabla  \bm {v}_2-\textrm{div} (  2\eta(\varphi_1) D\bm{v} )-\textrm{div} \big(  2(\eta(\varphi_1)-\eta(\varphi_2)) D\bm{v}_2 \big)+\nabla p\notag\\
 	&\quad=(\mu_1+\chi \sigma_1)\nabla \varphi+(\mu+\chi \sigma)\nabla \varphi_2\label{2atest33.c},\\
 	&\partial_t  \varphi+\bm{v}_{1} \cdot \nabla \varphi+\bm{v}\cdot \nabla \varphi_{2} =\Delta \mu -\alpha\varphi,\label{2atest11.a} \\
 &\mu=A\varPsi'(\varphi_{1})-A\varPsi'(\varphi_{2})-B\Delta \varphi-\chi \sigma, \label{2atest44.d}\\
 	&\partial_t  \sigma+\bm{v}_{1} \cdot \nabla \sigma+\bm{v} \cdot \nabla \sigma_{2}-\Delta \sigma\notag\\
 &\quad  = -\chi\Delta \varphi
 	-\mathcal{C}h(\varphi_1) \sigma + \mathcal{C}(h(\varphi_1)-h(\varphi_2))\sigma_{2}+S, \label{2atest22.b}
 \end{alignat}
 \end{subequations}
 almost everywhere in $\Omega\times (0,T)$, and
 \begin{subequations}
 	\begin{alignat}{3}
 	&\bm{v}=\mathbf{0},\quad {\partial}_{\bm{n}}\varphi={\partial}_{\bm{n}}\sigma={\partial}_{\bm{n}}\mu=0, \qquad\qquad\qquad\qquad \qquad\qquad\ \textrm{on} \   \partial\Omega\times(0,T),\\
 	&\bm{v}(0)=\bm{0},\quad \varphi(0)=0,\quad  \sigma(0)=0,\qquad\qquad\qquad \qquad\qquad\qquad \textrm{in} \   \Omega.
 	\end{alignat}
 \end{subequations}
 Multiplying \eqref{2atest33.c} by $\bm{v}$ and integrating over $\Omega$, we find
 \begin{align}
 \frac{1}{2} \frac{d}{dt}\|\boldsymbol{v}\|^{2} +2\big(\eta\left(\varphi_{1}\right) D \boldsymbol{v}, D \boldsymbol{v}\big)=J_1+J_2+J_3,
 \label{vt1}
 \end{align}
 %%%%%%%%%%%%%%%%%%%%%%%%%%%%%%%%%%%%%%%%%%%%%%%
 where
 \begin{equation*}
 \begin{aligned}
 J_{1} &=-\left(\boldsymbol{v}_1 \cdot \nabla \boldsymbol{v}, \boldsymbol{v}\right) -\left(\boldsymbol{v} \cdot \nabla \boldsymbol{v}_{2}, \boldsymbol{v}\right)=-\left(\boldsymbol{v} \cdot \nabla \boldsymbol{v}_{2}, \boldsymbol{v}\right), \\ J_{2}&=-2\left(\left(\eta\left(\varphi_{1}\right)-\eta\left(\varphi_{2}\right)\right) D \boldsymbol{v}_{2}, \nabla \boldsymbol{v}\right), \\
 J_3&=((\mu_1+\chi \sigma_1)\nabla \varphi,\bm{v})+((\mu+\chi \sigma)\nabla \varphi_2,\bm{v}).
 \end{aligned}
 \end{equation*}
 In the term $J_1$ we have used the fact $\left(\boldsymbol{v}_1 \cdot \nabla \boldsymbol{v}, \boldsymbol{v}\right)=0$.
 In light of the regularity of strong solutions, we have
 \begin{align}
 J_{1}&\leq \|\boldsymbol{v}\|\left\|\nabla \boldsymbol{v}_{2}\right\|_{\bm{L}^{3} }\|\boldsymbol{v}\|_{\bm{L}^{6} } \leq \frac{\eta_{*}}{16}\|\nabla \boldsymbol{v}\|^{2}+C\left\|\nabla \boldsymbol{v}_{2}\right\|_{\bm{L}^{3}}^{2} \|\boldsymbol{v}\|^{2},\notag
 \\
 J_{2 }
 &\le \sup_{s\in[-1,1]}|2\eta'(s)|\|\varphi\|_{L^6}\|D \bm{v}_2\|_{\bm{L}^3}\|\nabla  \bm{v}\|_{\bm{L}^2}\notag\\
 & \le \frac{\eta_{*}}{16}\|\nabla \boldsymbol{v}\|^{2} +C\|D \bm{v}_2\|_{\bm{L}^3}^2\|\varphi\|_{H^1}^{2}.\notag
 \end{align}
 Then recalling the calculation in \cite[(3.24)]{H}, we can estimate $J_3$ as follows
 \begin{equation}
 \begin{aligned}
 J_{3}
 &= B \int_\Omega [(\nabla \varphi_1+\nabla \varphi_2)\otimes\nabla \varphi] :\nabla \bm{v} dx\notag\\
 &\le B (\|\nabla \varphi_1\|_{\bm{L}^4}+\|\nabla \varphi_2\|_{\bm{L}^4})\|\nabla \varphi\|_{\bm{L}^4}\|\nabla \bm{v}\|\notag\\
 &\le C\| \varphi\|_{H^2}^{\frac{1}{2}}\| \varphi\|_{H^1}^{\frac{1}{2}}\|\nabla \bm{v}\|\notag\\
 &\le \frac{\eta_{*}}{16}\|\nabla \boldsymbol{v}\|^{2} +\frac{B}{4}\|\Delta \varphi\|^{2}+C \| \varphi\|_{H^1}^2.
 \end{aligned}\end{equation}
Thanks to Korn's inequality and the above estimates, we obtain
 \begin{align}
 \frac{1}{2} &\frac{d}{dt}\|\boldsymbol{v}\|^{2} +\frac{5\eta_{*}}{8}\|\nabla\bm{v}\|^2\notag\\
 &\quad\leq \frac{B}{4}\|\Delta \varphi\|^{2} +C\left(\left\|\nabla \bm{v}_{2}\right\|_{\bm{L}^{3}}^{2}\|\bm{v}\|^{2} +\left\|\nabla  \bm{v}_{2}\right\|_{\bm{L}^{3}}^{2}\|\varphi\|_{H^1}^{2} +\|\varphi\|_{H^1}^{2}\right).
 \label{vt2}
 \end{align}
 %%%%%%%%%%%%%%%%%%%%%%%%%%%%%%

 Next, multiplying \eqref{2atest11.a} by $\varphi-\Delta \varphi$ and integrating over $\Omega$, we find
 \begin{align}
 \frac{1}{2} \frac{d}{dt}\|\varphi\|_{H^1}^{2} +(\nabla \mu, \nabla \varphi) -(\nabla \mu, \nabla\Delta \varphi) +\alpha\|\varphi\|_{H^1}^2 = J_{4}+J_5,
 \label{varphi1}
 \end{align}
 %%%%%%%%%%%%%%%%%%%%%%%%%%%%%%%%%%%%%%%%%%%%%%%
 where
  \begin{align*}
 J_{4} &=\left(\varphi \boldsymbol{v}_{1}, \nabla \varphi\right) +\left(\varphi_{2} \boldsymbol{v}, \nabla \varphi\right)= \left(\varphi_{2} \boldsymbol{v}, \nabla \varphi\right),\\
 J_{5} &=-\left(\varphi \boldsymbol{v}_{1}, \nabla\Delta \varphi\right)-\left(\varphi_{2} \boldsymbol{v}, \nabla\Delta \varphi\right).
 \end{align*}
Here we have used the fact $\left(\varphi \boldsymbol{v}_{1}, \nabla \varphi\right)=0$.
We observe from equation  \eqref{2atest44.d}, Agmon's inequality in two dimensions and Young's inequality that the terms on the left-hand side of \eqref{varphi1} can be controlled as
 \begin{align}
 (\nabla \mu, \nabla \varphi)
 &=B\|\Delta\varphi\|^2+ A \big(\varPsi^{\prime\prime}(\varphi_{1})\nabla \varphi+(\varPsi^{\prime\prime}(\varphi_{1}) -\varPsi^{\prime\prime}(\varphi_{2}))\nabla \varphi_{2}, \nabla \varphi\big)\notag \\
 &\quad -\chi(\nabla \sigma, \nabla \varphi)\notag\\
 &\geq B\|\Delta\varphi\|^2-A|\theta_0|\|\nabla \varphi\|^2 -2A\max_{s\in[-1+\delta_T,1-\delta_T]}|\varPsi'''(s)|\|\nabla \varphi_2\|\|\varphi\|_{L^\infty}\|\nabla \varphi\|
 \notag\\
 &\quad -|\chi|\|\Delta \varphi\|\|\sigma\|\notag\\
 &\geq \frac{3B}{4}\|\Delta \varphi\|^{2}-C\big( \|\varphi\|_{H^1}^2+\|\sigma\|^2\big),
 \notag
 \end{align}
and in a similar manner,
 \begin{align}
 -(\nabla\mu,\nabla\Delta \varphi)&=B\|\nabla\Delta \varphi\|^2 - A\big(\varPsi^{\prime\prime}(\varphi_{1})\nabla \varphi+((\varPsi^{\prime\prime}(\varphi_{1})-\varPsi^{\prime\prime}(\varphi_{2})) \nabla \varphi_{2},\nabla\Delta \varphi)\big)\notag\\
 &\quad+\chi( \nabla\sigma,\nabla\Delta \varphi )\notag\\
 &\ge B\|\nabla\Delta \varphi\|^2 -A \max_{s\in[-1+\delta,1-\delta]}|\varPsi''(s)| \|\nabla\varphi\|\|\nabla\Delta \varphi\| \notag\\ &\quad -A\max_{s\in[-1+\delta,1-\delta]} |\varPsi^{\prime\prime\prime}(s)| \|\nabla\varphi_2\| \|\varphi\|_{L^{\infty}}\|\nabla\Delta \varphi\|\notag\\
 &\quad -|\chi|\|\nabla\sigma\|\|\nabla\Delta \varphi\| \notag\\
 &\ge \frac{3B}{4}\|\nabla\Delta \varphi\|^2 -C\|\varphi\|_{H^1}^2 - C\|\nabla\sigma\|^2.\notag
 \end{align}
 On the other hand, we deduce from the fact $\|\Delta\varphi\|^2\leq \|\nabla \Delta\varphi\|\|\varphi\|$, Ladyzhenskaya, Agmon and Young's inequalities that
\begin{align}
 J_4+J_5&\le \|\bm{v}_1\|\|\varphi\|_{L^{\infty}}\|\nabla\Delta \varphi\| + \|\varphi_2\|_{L^\infty}\|\bm{v}\| \big(\|\nabla \varphi\|+\|\nabla\Delta \varphi\|\big)\notag\\
 &\leq C(\|\Delta\varphi\|+\|\varphi\|)^\frac12\|\varphi\|^\frac12 \|\nabla\Delta \varphi\| + \|\bm{v}\| \big(\|\nabla \varphi\|+\|\nabla\Delta \varphi\|\big)\notag\\
 &\le \frac{B}{4}\|\nabla\Delta \varphi\|^2+C\|\varphi\|_{H^{1}}^2+C\|\bm{v}\|^2. \notag
 \end{align}
 From the above estimates, we deduce from \eqref{varphi1} that
 \be
 \frac{1}{2} \frac{d}{dt}\|\varphi\|_{H^1}^{2} +\frac{B}{2}\|\nabla\Delta \varphi\|^2 +\frac{3B}{4}\|\Delta\varphi\|^2
 \le  C\big(\|\bm{v}\|^2+\|\varphi\|_{H^{1}}^2 +\|\sigma\|^2\big)
 + C_8\|\nabla\sigma\|^2,
 \label{varphi4}
 \ee
 where $C_8$ is a positive constant depending on norms of the initial data, coefficients of the system, $\Omega$ and  $T$.

 Adding \eqref{vt2} and \eqref{varphi4} together, we obtain
 \begin{align}
 \frac{1}{2} &\frac{d}{dt}\left(\|\boldsymbol{v}\|^{2} +\|\varphi\|_{H^1}^{2}\right) +\frac{5\eta_{*}}{8}\|\nabla\bm{v}\|^2+ \frac{B}{2}\|\nabla\Delta \varphi\|^2  +\frac{B}{2}\|\Delta \varphi\|^{2}\notag\\
 &\quad\le C\big(1+\left\|\nabla \bm{v}_{2}\right\|_{\bm{L}^{3}(\Omega)}^{2}\big) \big(\|\bm{v}\|^{2}+
  \|\varphi\|_{H^1}^{2} \big) +C\|\sigma\|^2+ C_8\|\nabla\sigma\|^2.
 \label{2vphi}
 \end{align}
 Now multiplying \eqref{2atest22.b} by $\sigma$ and integrating over $\Omega$, we find
 \begin{align}
 \frac{1}{2} \frac{d}{dt}\|\sigma\|^{2}+\|\nabla \sigma\|^2 =\sum_{i=6}^{9} J_{i},\label{sigma}
 \end{align}
 %%%%%%%%%%%%%%%%%%%%%%%%%%%%%%%%%%%%%%%%%%%%%%%
 with
 \begin{align}
 J_6  &=-\chi(\Delta\varphi, \sigma),\notag\\
 J_{7} &=\big(\sigma \boldsymbol{v}_{1}, \nabla \sigma\big)+\big(\sigma_{2} \boldsymbol{v}, \nabla \sigma\big)=\big(\sigma_{2} \boldsymbol{v}, \nabla \sigma\big), \notag\\
 J_{8}&= -(\mathcal{C}h(\varphi_{1}) \sigma,\sigma)-(\mathcal{C}(h(\varphi_{1})- h(\varphi_{2}))\sigma_{2},\sigma),\notag\\
 J_9&=(S,\sigma),\notag
 \end{align}
 where we have used the fact $\left(\sigma \boldsymbol{v}_{1}, \nabla \sigma\right)=0$.
 The terms $J_6,...,J_9$ can be estimated by using H\"{o}lder, Ladyzhenskaya, Agmon and Young's inequalities:
 \begin{align}
 J_6&\leq \gamma_1\|\Delta \varphi\|^{2}+C\gamma_1^{-1}\|\sigma\|^2,\notag\\
 J_7 &\le\|\sigma_2\|_{L^4} \|\bm{v}\|_{\bm{L}^4}\|\nabla\sigma\| \le \frac{1}{4}\|\nabla\sigma\|^2 +\gamma_1\|\nabla\bm{v}\|^2 +C\gamma_1^{-1}\|\bm{v}\|^2,
 \notag\\
 J_8&\le |\mathcal{C}|\max_{s\in[-1,1]}|h(s)|\|\sigma\|^2+  C\max_{s\in[-1,1]}|h'(s)|\|\varphi\|_{L^\infty} \|\sigma_2\|\|\sigma\|\notag\\
 &\le \gamma_1\|\Delta \varphi\|^{2} +C(1+\gamma_1^{-1})\big(\|\sigma\|^2 +\|\varphi\|^2\big),\notag\\
 J_9&\leq \frac12\|\sigma\|^2+ \frac12\|S\|^2,\notag
 \end{align}
 where $\gamma_1$ is a positive constant to be determined below.  From the above estimates, we deduce from \eqref{sigma} that
 \begin{align}
 &\frac{1}{2} \frac{d}{dt}\|\sigma\|^{2} +\frac{3}{4}\|\nabla\sigma\|^2\notag\\
 &\quad \leq \gamma_1\|\nabla\bm{v}\|^2 +2\gamma_1\|\Delta \varphi\|^{2}+C(1+\gamma_1^{-1})(\|\bm{v}\|^2+ \|\varphi\|^2 +\|\sigma\|^2) +\frac12\|S\|^2.
 \label{sigma1}
 \end{align}
 Multiplying \eqref{sigma1} by $\gamma_2=2C_8>0$ and summing up with \eqref{2vphi}, we obtain
 \begin{align}
 & \frac{d}{dt}\mathcal{H}(t)  +  \mathcal{I}(t)  \leq  \beta(t)\mathcal{H}(t)+C_8\|S\|^2,\quad  \forall \, t\in[0,T].
 \label{2menergy2}
 \end{align}
 where
 \begin{align*}
 \mathcal{H}(t)&=\frac{1}{2}\|\boldsymbol{v}(t)\|^{2} +\frac{1}{2}\|\varphi(t)\|_{H^1}^{2} +C_8\|\sigma(t)\|^{2},
 \notag\\
 \mathcal{I}(t)& =\left(\frac{5\eta_{*}}{8 } - 2C_8\gamma_1 \right)\|\nabla\bm{v}(t)\|^2 +\frac{B}{2}\|\nabla \Delta \varphi(t)\|^{2} + \left(\frac{B}{2}-4C_8\gamma_1\right)\|\Delta \varphi(t)\|^{2} +\frac12 C_8\|\nabla\sigma(t)\|^2,\\
 \mathcal{\beta}(t)
 &=C(1+\gamma_1^{-1})\big(1+\left\|\nabla \bm{v}_{2}(t)\right\|_{\bm{L}^{3}(\Omega)}^{2}\big).
 \end{align*}
 Taking
 $$\gamma_1=\frac{1}{16 C_8}\min\{\eta_*,\,B\}>0,$$
 it follows that
 $$
 \mathcal{I}(t)\geq \frac{\eta_{*}}{2 } \|\nabla\bm{v}\|^2 +\frac{B}{2}\|\nabla \Delta \varphi\|^{2} + \frac{B}{4}\|\Delta \varphi\|^{2} +\frac12 C_8\|\nabla\sigma\|^2.
 $$
 Since $\beta(t)\in L^1(0,T)$ (recall \eqref{phih1}), we infer from Gronwall's lemma that
 \begin{align}
 &\|\boldsymbol{v}(t)\|^{2}+\|\varphi(t)\|_{H^1}^{2} +\|\sigma(t)\|^{2} +\int_{0}^t\Big(\|\boldsymbol{v}(s)\|^{2}_{\bm{H}^1} +\|\varphi(s)\|_{H^3}^{2} +\|\sigma(s)\|^{2}_{H^1}\Big)\, d s\notag\\
 &\quad\le C_T\Big( \|\boldsymbol{v}_0\|^{2} +\|\varphi_0\|_{H^1}^{2}+\|\sigma_0\|^{2}+\int_0^t \|S(s)\|^2\,ds\Big),\quad \forall\,t\in[0,T],\label{bd1}
 \end{align}
 where $C_T$ is a positive constant depending on norms of the initial data, coefficients of the system, $\Omega$ and $T$. The proof of Corollary \ref{2main1} is complete.
 \hfill $\square$

\section{Propagation of Regularity for Weak Solutions}
\label{phsw}
\setcounter{equation}{0}

In this section, we prove the regularity of global weak solutions to problem \eqref{f3.c}--\eqref{ini0} for positive time. \smallskip

\textbf{Proof of Corollary \ref{regw}}. Let $(\bm{v}, \varphi, \mu, \sigma)$ be a global weak solution to problem \eqref{f3.c}--\eqref{ini0} on $[0,T]$. Thanks to its  regularity presented in Definition \ref{maind}, we are allowed to multiply \eqref{f3.c} by $\bm{v}$, \eqref{f1.a} by $\mu$, \eqref{f4.d} by  $\partial_t \varphi$, \eqref{f2.b} by $\sigma+\chi(1-\varphi)$, and integrate over $\Omega$ to obtain the energy identity \eqref{BEL}, namely,
 \begin{align}
 \frac{d}{dt}\mathcal{E}(t)
 +\mathcal{D}(t)+\int_\Omega \alpha(\varphi-c_0)\mu \, dx  =\int_\Omega (-\mathcal{C} h(\varphi) \sigma +S)(\sigma +\chi(1-\varphi)) dx,
 \label{BEL5}
 \end{align}
 for a.e. $t\in (0,T)$, where $\mathcal{E}(t)$ and $\mathcal{D}(t)$ are given by \eqref{E}, \eqref{D}, respectively.

 Similar to \cite[Theorem 4.2]{GMT}, we can deduce from \eqref{mph2} and \eqref{BEL5} that for any $\tau\in (0,T)$, there exists $\tau_1\in (0, \tau)$ such that (see also \eqref{estimate1})
 $\bm{v}(\tau_1)\in \bm{H}^1_{0,\mathrm{div}}(\Omega)$,
 $\varphi(\tau_1) \in H^{2}_N(\Omega)$,
 $\sigma(\tau_1) \in H^1(\Omega)$
 with $\left\|\varphi(\tau_1)\right\|_{L^{\infty}} \leq 1,\ |\overline{\varphi}(\tau_1)|<1$ and
	$\mu(\tau_1)=A\varPsi^{\prime}\left(\varphi(\tau)\right) -B\Delta \varphi(\tau_1) -\chi\sigma(\tau_1)\in H^1(\Omega)$, and the corresponding norms may depend on $\|\bm{v}_0\|$, $\|\varphi_0\|_{H^1}$, $\|\sigma_0\|$, $\int_\Omega \varPsi(\varphi_0)dx$, coefficients of the system, $\Omega$ and the time $\tau_1$.
Then taking $(\bm{v}(\tau_1), \varphi(\tau_1), \sigma(\tau_1))$ as an initial datum, we apply Theorem \ref{2main} to obtain a unique strong solution defined on $[\tau_1,T]$, which indeed coincides with the previous weak solution due to the uniqueness result \cite[Theorem 2.2]{H}. Furthermore, by Theorem \ref{phase}, we see that there exists  $\widetilde{\delta}\in (0,1)$ such that $\|\varphi(t)\|_{L^{\infty}} \leq 1-\widetilde{\delta}$ for $[\tau_1,T]$, where $\widetilde{\delta}$ may depend on strong norms of $(\bm{v}(\tau_1), \varphi(\tau_1), \sigma(\tau_1))$ (and thus on $\tau_1$), coefficients of the system, $\Omega$ and $T$. Since $\tau\in (0,T)$ is arbitrary, we have thus shown that the weak solution to problem \eqref{f3.c}--\eqref{ini0} regularizes instantaneously for positive time to be a strong solution and the strict separation from pure phases $\pm 1$ takes place as long as $0<t\leq T$. Thus, the proof of Corollary \ref{regw} is complete. \hfill $\square$

\begin{remark}
Under suitable assumptions like \eqref{ass1}, in view of the dissipative estimates \eqref{bd2}, \eqref{bd2a} and the regularizing effects illustrated above, we can further analyze the long-time behavior of problem \eqref{f3.c}--\eqref{ini0}, for instance, the attractor theory within the framework of infinite dimensional dynamical systems as well as the convergence to a single equilibrium for every bounded trajectories. This will be studied in a forthcoming paper.
\end{remark}

\section*{Acknowledgments}
\noindent
The authors would like to thank the anonymous referee for several helpful comments.
The second author is partially supported by NNSFC grant No. 11631011 and the Shanghai Center for Mathematical Sciences at Fudan University.

%%%%%%%%%%%%%%%%%%%%%%%%%%%%%%%%

%
\end{document}